\numberwithin{equation}{section}
\newtheorem{theorem}{Theorem}[section]
\newtheorem{remark}[theorem]{Remark}
\newtheorem{definition}[theorem]{Definition}
\newtheorem{lemma}{Lemma}[section]
\newtheorem{corollary}[theorem]{Corollary}
\newtheorem{assumption}[theorem]{Assumption}
\title{Wasserstein convergence rates for stochastic particle approximation
of Boltzmann models}
\author{Giacomo Borghi\thanks{Maxwell Institute for Mathematical Sciences and Department of Mathematics, School of Mathematical and Computer Sciences (MACS), Heriot-Watt University, Edinburgh, UK
  (\texttt{g.borghi@hw.ac.uk}, \texttt{l.pareschi@hw.ac.uk}).}
\and Lorenzo Pareschi\footnotemark[2]$\;\,$\thanks{Department of Mathematics and Computer Science, University of Ferrara, Italy.}
}
\newcommand{\Rd}{ {\mathbb{R}^d}}
\newcommand{\Rm}{ {\mathbb{R}^m}}
\newcommand{\R}{ \mathbb{R}}
\newcommand{\E}{\mathcal{E}}
\newcommand{\D}{\Omega}
\newcommand{\ve}{\varepsilon}
\renewcommand{\d}{\textup{d}}
\newcommand{\supp}{\textup{supp}}
\newcommand{\law}{\textup{Law}}
\newcommand{\unif}{\textup{Unif}}
\newcommand{\bern}{\textup{Bern}}
\newcommand{\coll}{\mathscr{C}}
\renewcommand{\b}{B}
\renewcommand{\j}{\textup{\textbf{j}}}
\renewcommand{\v}{\textup{\textbf{v}}}
\newcommand{\W}{\mathbb{W}}
\newcommand{\BV}{\textup{\textbf{{V}}}}
\newcommand{\BW}{\textup{\textbf{{W}}}}
\newcommand{\fdt}{f^{\Delta t}}
\newcommand{\KR}{\mathtt{KR}}
\newcommand{\BL}{\mathtt{BL}}
\begin{document}

\maketitle

\begin{abstract}
We establish quantitative convergence rates for stochastic particle approximation based on Nanbu-type Monte Carlo schemes applied to a broad class of collisional kinetic models. Using coupling techniques and stability estimates in the Wasserstein-1 (Kantorovich–Rubinstein) metric, we derive sharp error bounds that reflect the nonlinear interaction structure of the models. Our framework includes classical Nanbu Monte Carlo method and more recent developments as Time Relaxed Monte Carlo methods. The results bridge the gap between probabilistic particle approximations and deterministic numerical error analysis, and provide a unified perspective for the convergence theory of Monte Carlo methods for Boltzmann-type equations. As a by-product, we also obtain existence and uniqueness of solutions to a large class of Boltzmann-type equations.
\end{abstract}

\bigskip

\textbf{keywords}
Monte Carlo methods, Boltzmann equation, Wasserstein
distance, Nanbu method, Time Relaxed Monte Carlo, convergence estimates

\bigskip
\textbf{MSCcodes}
65C05, 35Q20, 82C22, 49Q22, 65M15

\section{Introduction}
The Boltzmann equation is a foundational model in non-equilibrium statistical mechanics, describing the evolution of dilute gases by linking microscopic interactions to macroscopic fluid dynamics^^>\cite{cercignani, CIP}. Since its introduction, it has prompted fundamental mathematical investigations into well-posedness, asymptotic behavior, and hydrodynamic limits. Its high dimensionality and nonlinearity, however, pose significant challenges for both analysis and computation.

Among numerical approaches, Direct Simulation Monte Carlo methods^^>\cite{bird1976molecular} and Nanbu-type algorithms^^>\cite{nanbu1980} have become essential for approximating solutions to the Boltzmann equation^^>\cite{caflisch1998, pareschi2001intro,wagner2005book}. These particle-based methods simulate stochastic binary collisions and are valued for their simplicity, scalability, and effectiveness in high-dimensional regimes. Originally developed for applications in gas dynamics and plasma physics, they have since found broader use in areas such as social dynamics, biological systems, and data science, where kinetic models describe collective behavior in abstract spaces^^>\cite{partos13, benfenati2021binary, tosin2019traffic,cordier2005market,toscani2006opinion,boudin2010opinion,bisi2009,AlPa}.


Despite their practical success, a rigorous convergence analysis in the context of numerical approximation has remained incomplete.
While consistency results exist under restrictive assumptions^^>\cite{babovsky1989full,pulvirenti1994,wagner2005book,myong2019review,wagner1992bird}, quantitative error estimates in metrics such as Wasserstein distances are still lacking. 

The purpose of this work is to address this gap by establishing a general framework for quantifying the convergence of Nanbu-type particle systems for a broad class of collisional kinetic equations, including the Boltzmann equation for specific interaction kernels. To our knowledge, this is the first work to derive Wasserstein convergence rates for Nanbu-type schemes beyond consistency and for general Lipschitz collision maps, thereby contributing a rigorous numerical analysis framework for a broad class of Monte Carlo solvers for Boltzmann-type equations. The analysis relies on coupling techniques, moments control, and probabilistic tools from optimal transport theory. We derive explicit error bounds in the Wasserstein-1 distance and extend our results to advanced algorithms such as Time Relaxed Monte Carlo methods^^>\cite{pareschi2001trmc}. 

\subsection{Collisional kinetic equations and Nanbu method}
We are interested in evolutionary equations describing a system of particles, sometimes called \textit{agents}, undergoing interactions of binary type in a domain $\D\subseteq\Rd$, which is possibly unbounded. At every instant of time, any couple of particles $v,v_* \in \D$ may interact and change their state to $v',v'_* \in \D$. The interaction, or \textit{collision}, depends on additional parameters $\theta, \theta_*\in \Theta$, and is determined by a collision map $\coll: \D \times \D\times \Theta \to \Rd$:
\begin{equation}
\begin{split}
v' &= \coll(v,v_*, \theta) \\
v_*'& = \coll(v_*,v,\theta_*)\,.
\end{split} \label{eq:coll}
\end{equation}
The parameters $\theta, \theta_*$ are assumed to be distributed according to a probability density $\b = \b(\theta)$, and might be dependent on each other. Let $f= f(v,t)$ with $\int f(v,t) \d v = 1$ be the particle density at time $t\geq 0$. The evolution of the system is described the integro-differential equation of Boltzmann type
\begin{equation} \label{eq:boltzmann} \begin{dcases}
\frac{\partial f}{\partial t }(v,t) = Q(f,f)(v,t) &\quad v\in \Omega,\, t\geq 0 \\
f(v,0)  = f_0(v) &\quad v \in \Omega
\end{dcases}
\end{equation}
where the collisional operator $Q(\cdot,\cdot)$ is given by 
\begin{small}
\begin{equation} \label{eq:operator}
\int \phi(v) Q(f,f)(v,t)\, \d v = \frac12\! \iiint\! \b(\theta) \left( \phi(v') + \phi(v'_*) - \phi(v) - \phi(v_*)\right)f(v,t) f(v_*,t) \d \theta\,\d v\,\d v_*
\end{equation}\end{small}
for any bounded and continuous test function $\phi$.

This type of equations is widely used to model different types of complex interacting systems^^>\cite{partos13}. Prominent examples are the Kac's model^^>\cite{kac1956} in physics, kinetic models in economics for wealth distribution^^>\cite{cordier2005market}, models in social sciences for opinion formation^^>\cite{toscani2006opinion}, and in particle-based optimization^^>\cite{benfenati2021binary}. The homogeneous Boltzmann equation also falls into this class, for specific interaction kernels. We refer to Section \ref{sec:models} for more details and an overview of the kinetic models taking the form \eqref{eq:boltzmann} with collisional operator \eqref{eq:operator}.

Differently form Direct Simulation Monte Carlo (DSMC) methods^^>\cite{bird1976molecular} which aim to simulate the microscopic dynamics of the particles, the Nanbu algorithm^^>\cite{nanbu1980} is designed to provide a numerical solution to \eqref{eq:boltzmann}. 
Let  $\Delta t \in (0,1]$ be a time step, we consider the forward Euler discretization of  \eqref{eq:boltzmann} given by $f_{n+1}  = f_n + \Delta t Q(f_n,f_n)$. 
Note that the collisional operator \eqref{eq:operator} can be written as $Q(f,f) = Q^+(f,f) - f$, where $Q^+$ corresponds to the positive contribution of the new particles generated via collision, so that the update can be equivalently formulated as
\begin{equation} \label{eq:euler}
f_{n+1} = (1-\Delta t) f_{n} + \Delta t Q^+(f_n,f_n)\,.
\end{equation}
The Nanbu method aims to approximate the Euler scheme $f_n$ with an ensemble of $N$ particles $V_n^i$, $i = 1,\dots,N$, 
\begin{equation*}
f_n \approx f^N_n \qquad \textup{where} \qquad f^N_n :=\frac1N \sum_{i=1}^N \delta_{V_n^i}\;,
\end{equation*}
and their update is based on a probabilistic interpretation of \eqref{eq:euler}.
Each particle at time step $n+1$ is either taken from $f^N_n$ (with probability $\Delta t$), or it is the result of a collision, and sampled from $Q^+(f_n^N,f^N_n)$ (with probability $1 - \Delta t$). Notably, the Nanbu Monte Carlo approximation of the collisional operator is designed to maintain a computational cost of $ \mathcal{O}(N) $, despite its quadratic structure. We refer to Algorithm \ref{alg:nanbu} for a precise description of the particle update strategy.

\begin{algorithm}[t]
\caption{Nanbu Monte Carlo method}
\label{alg:nanbu}
\begin{algorithmic}
\STATE{Sample $N$ particles $V^i_0$, $i = 1, \dots,N$ from $f_0$}
\STATE{$n = 0$}
\WHILE{$n\Delta t < T$}
\STATE{with probability $1 - \Delta t:$}
\STATE{\hspace{1cm}  $V_{n+1}^i = V_{n}^i$ }
\STATE{with probability $\Delta t:$}
\STATE{\hspace{1cm}  select a random particle $j$}
\STATE{\hspace{1cm}  sample a parameter $\theta$ from $\b(\theta)$}
\STATE{\hspace{1cm}  $V^i_{n+1} =\coll(V_n^i, V_n^j, \theta)$}
\STATE{$n = n+1$}
\ENDWHILE
\end{algorithmic}
\end{algorithm}

\subsection{Contribution of the paper} 

The aim of the paper is to analyze the error introduced by the Nanbu method in terms of number  $N$ of particles used in the simulation and the time step $\Delta t$. We also extend the analysis to Time Relaxed Monte Carlo variants^^>\cite{pareschi2001trmc}.

To quantify the convergence, we employ the Kantorovich--Rubinstein norm^^>\cite{kantorovich1958} for signed Radon measures
\[
\| \mu \|_\KR:= \sup_{\|\phi\|_{\textup{Lip}}\leq 1} \int_{\Rd} \phi(v)\mu(\d v)\,\qquad \mu\in\mathcal{M}(\Rd)
\]
which extends the Wasserstein-1 distance  between probability measures with finite first moments, $\W_1(f,g) = \| f- g\|_\KR$^^>\cite{villani2009}. 
Under the assumption of $\coll$ being Lipschitz and growing linearly (Assumption \ref{asm:coll}), and $B$ having bounded support, we claim the following:

\begin{itemize}
\item  (Theorem \ref{t:nanbu}) The Nanbu particle system is as efficient as a system of true i.i.d. particles in approximating the forward Euler scheme:
\[
\sup_{n\Delta t \leq T}\mathbb{E}\|f^N_n - f_n\|_\KR \lesssim \ve(N) 
\]
with $\ve(N)$ being the  approximation error of $N$ i.i.d. $f_n$-distributed particles.
\item (Theorem \ref{t:euler}, Corollary \ref{c:full}) There exists a unique weak measure solution $f$ (see Definition \ref{def:weaksol}) to the Cauchy problem \eqref{eq:boltzmann}
and it holds
\[
\sup_{t_n = n \Delta t \leq T}\mathbb{E} \|f^N_n - f(t_n)\|_\KR \lesssim  \Delta t  + \ve(N) \,.
\]
\item (Theorem \ref{t:trmc}) Since the Monte Carlo strategy used in Algorithm \ref{alg:nanbu} to sample from $Q^+(f^N_n,f^N_n)$ is independent of the numerical time discretization of the kinetic model, we extend the convergence in $N$ to a larger class of schemes beyond forward Euler. We show in particular that first-order Time Relaxed Monte Carlo methods (Algorithm \ref{alg:trmc}) converge with optimal rate $\ve(N)$ to the corresponding discretized dynamics.
\end{itemize}

\subsection{Literature review}

The convergence properties of the Nanbu scheme towards the explicit Euler discretization of the space homogeneous Boltzmann equation have been first studied in^^>\cite{Babovsky1989ACP}. The author proves that if $V^i_0$, $i = 1,\dots,N$ are $f_0$-distributed and i.i.d. then, for the first step $n = 1$, we have weak convergence (in duality with the set of bounded continuous functions) of the empirical measure:
\begin{equation*}
f^N_1 \rightharpoonup f_1 \qquad \textup{as}\quad  N \to \infty \quad  \textup{in probability}.
\end{equation*}
This result cannot be iterated to prove convergence for the subsequent iterations $n>1$ and, therefore, it can be considered a consistency result for the Nanbu particle method, see also^^>\cite[Section 3.5.4]{wagner2005book} for a discussion on this aspect. With the same technique, the result was then extended to the full Boltzmann equation in^^>\cite{babovsky1989full}. Another proof of consistency was proposed in^^>\cite{wagner2005book} using the Bounded Lipschitz norm $\| \cdot\|_{\BL}$ (which metrizes weak convergence like $\|\cdot\|_\KR$). In particular, the authors show that 
$
\lim_{N \to \infty} \mathbb{E}  \| f_1 - f_1^N\|_{\BL}= 0\,.
$
These consistency results have also been extended to the Nanbu--Babovsky Monte Carlo strategy^^>\cite{Babovsky1989ACP}, which is an exactly conservative variant of the Nanbu strategy.

If we broad our viewpoint by considering DSMC methods for the Boltzmann equation, in particular Bird's particle method ^^>\cite{bird1976molecular}, we note that theoretical analysis available in the literature lacks to prove quantitative convergence over an entire time window providing a convergence rate in $N$, see for instance^^>\cite{wagner1992bird,pulvirenti1994,wagner2003} and the more recent review^^>\cite{myong2019review}. Therefore, up to our knowledge, a quantitative convergence result as the ones in Theorem \ref{t:nanbu}, represent a novelty in the analysis of Monte Carlo numerical methods for collisional kinetic equations.

The error analysis we propose follows a different line of research in kinetic theory, initiated by Kac^^>\cite{kac1956}, which aims to derive the Boltzmann equation as the mean-field limit of a particle system of jump processes. The stochastic particles are assumed to evolve in a time-continuous settings and undergo binary collisions. Proving convergence towards the Boltzmann kinetic model as $N \to \infty$ is related to the \textit{propagation of chaos} property of the system^^>\cite{diez2022review1,diez2022review2}, which, loosely speaking, states that particles become uncorrelated as $N\to \infty$. We refer to^^>\cite{mischler2013program} for an overview on the subject, and to^^>\cite{heydecker2022} for updated references. 
Particularly relevant for this paper is the work^^>\cite{fournier2016} where the authors prove propagation of chaos, with a rate, of a time-continuous particle system undergoing Nanbu collisions.  The interaction is of Nanbu type in the following sense: at time $t$, particle $i$ might change state due to a collision with particle $j$, but particle $j$ does not necessarily change. The authors are able to prove chaos propagation via a coupling technique and Wasserstein-2 distance for hard potentials and Maxwellian particles. 

The work^^>\cite{cortez2016} considers time-continuous particle systems with Lipschitz binary interactions, where particles collide \textit{as couples}: unlike Nanbu's collisions, if particle $i$ collides with particle $j$, then particle $j$ collides with particle $i$ too (as in the Nanbu--Babovsky algorithm). The authors of ^^>\cite{cortez2016} are able to show that the chaos propagates also in this case with a similar technique to^^>\cite{fournier2016}. The result has been extended to Maxwellian molecules undergoing classical Boltzmann collisions in^^>\cite{cortez2018}. 

We conclude the literature review by mentioning that first order convergence in time of the Nanbu method was shown in^^>\cite{babovsky1989full}, and also in^^>\cite{wagner2005book}. Clearly, this is what we expected, as the method is based on the simulation via particles of the explicit Euler discretization  \eqref{eq:euler}. Error estimates on Forward and Backward Euler discretizations of the Boltzmann equation were studied in^^>\cite{mischler1999spatially} using $L^1$ norms. Time Relaxed Monte Carlo scheme based on higher order asymptotic-preserving time discretization of the Boltzmann equation show faster and uniform convergence with respect to the time step $\Delta t$^^>\cite{gabetta1997}. Well-posedness of the Boltzmann equation in a measure framework was studied in^^>\cite{lu2012measure}. The error analysis we propose for the forward Euler scheme provides a well-posedness result for the Cauchy problem \eqref{eq:boltzmann}, which, to our knowledge, is lacking in the literature 
(despite being sensibly easier compared with the classical Boltzmann equation).

\subsection{Outline of the paper} We first present in Section \ref{sec:results} the settings and the main results of the paper, including an overview of the models considered. Section \ref{sec:prel} presents the necessary notation for the proofs and  recall some results we will use throughout the paper.  Sections \ref{sec:nanbu} is devoted to the convergence of the Nanbu Monte Carlo methods to the forward Euler scheme, and the extension to  Time Relaxed Monte Carlo methods (Section \ref{sec:trmc}).
In Section \ref{sec:euler} we show well-posedeness of the kinetic equations considered and quantitative convergence of the forward Euler scheme.  We conclude the paper with an outlook on future research perspectives in Section \ref{sec:end}.

\section{Main results}
\label{sec:results}

\subsection{Assumptions and examples}
\label{sec:models}

The kinetic equation \eqref{eq:boltzmann} with collisional operator $Q$  given by \eqref{eq:operator} is a flexible model that can describe many particle or multi-agent systems undergoing binary collisions. Each model is then characterized 
by a different collisional map $\coll$ which determines the microscopic dynamics of the interaction. The error analysis will cover models where $\coll$ satisfy a Lipschitz and growth assumption:

\begin{assumption}
\label{asm:coll}
There exist constants $L_\coll, C_\coll>0$ such that for any $v,v_*,w,w_*$$\in \D$ and $\theta\in \Theta$ 
\begin{align} \label{eq:lip} 
|\coll(v,v_*,\theta) - \coll(w,w_*,\theta)| &\leq L_\coll(1 + |\theta|)\left(|v - w| + |v_* - w_*| \right)\\
|\coll(v,v_*,\theta)|& \leq  C_\coll(1 + |\theta|)\left(|v| + |v_*|\right)\,.
\label{eq:lin} 
\end{align}
\end{assumption}

We give in the following an overview of some kinetic models this assumption includes.

\begin{enumerate}[start=1,label={M\arabic*)}]
\item \label{item:M1} Kac's model^^>\cite{kac1956}, where $\D = \R$, 
\[\coll(v,v_*,\theta)= v\cos \theta - v_* \sin \theta\,,\]
and $\theta$ is uniformly sampled from $[0,2\pi]$, that is, $\b(\theta) = 1/(2\pi)$. The parameter of the partner particle is set to $\theta_* = -\theta$.
\item \label{item:M2}
 Kinetic models in economics for wealth distribution, as the 
Cordier--Pareschi--Toscani model^^>\cite{cordier2005market} for 
money asset exchanges. Here,  $\D = [0,\infty)$, and the interaction takes the form 
\[
\coll(v,v_*,\theta = \eta)= v - \gamma(v - v_*) + \eta v_*\,,
\]
with  $\gamma\in (0,1/2)$ and $\eta$ is a zero-mean random variable. The parameter $\theta_* = \eta_*$ of the partner colliding particle is sampled independently.
Similar models employing different parameters are Slanina’s model^^>\cite{slanina2004},   Chackraborti and Chackrabarti  model for gambling^^>\cite{chaka2000}, and, for instance, the ones considered in ^^>\cite{pareschi2006self}.

\item  \label{item:M3} Kinetic models of opinion formation^^>\cite{toscani2006opinion} where the domain $\D \in [-1,1]$ represents the spectrum of possible opinions on a topic, and 
\[
\coll(v,v_*,\theta = \eta) = v - \gamma P(v,v_*)(v - v_*) +  D(v,v_*)\eta \,,
\]
where $\gamma>0$ and $\eta$ is sampled from a zero-mean probability distribution with compact support.  Extension to arbitrary dimensions $d$, with $\Omega = [-1,1]^d$ has also been proposed^^>\cite{boudin2010opinion}. 
The functions $P,D$, as well as the parameters are chosen such that the post-collisional particles $v',v'_*$ still belong to the domain $[-1,1]$. In this settings, Assumption \ref{asm:coll} is satisfied provided $P,D$ are Lipschitz continuous. Analogous kinetic models are used for swarming dynamics in a space homogeneous setting with $\D = \R^3$ (see^^>\cite{AlPa, carrillo2010asymptotic}).

\item  \label{item:M3bis} Kinetic models for traffic flows, in space homogeneous settings^^>\cite{tosin2019traffic}. Here, $\Omega = [0,v_{\max{}}]$, and 
\[
\coll(v,v_*, \theta  = \eta) = v + \gamma I(v,v_*;\rho) + D(v;\rho)\eta
\]
where $I,D$ are some interaction functions which depend on the traffic density $\rho>0$. The random variables $\eta, \eta_*$ are independently sampled from a zero-mean distribution. 
Whether $\coll$ satisfies Assumption \ref{asm:coll} depends on the choice of $I$, $D$.%

\item \label{item:M4} 

Space homogeneous Boltzmann's model for rarefied gases for a specific collisional kernel. Here $\D = \R^3$,  and we consider the collision parametrization
\[
\coll(v,v_*, \theta = e) = v + e \langle e,v_* - v \rangle\,,
\]
with $e$ is uniformly distributed over the sphere $\mathbb{S}^2$, and $\theta_* = \theta = e$. This model, sometimes referred in the literature as the Morgenstern model, has been considered in^^>\cite{morgen1955,carlen1992,carlen2008}. It corresponds to the classical Boltzmann homogenous equation with scattering cross section $\sigma(x) = 1/\sqrt{2(1-x)}$, see^^>\cite{carlen2008}.
We refer to Remark \ref{rmk:boltzmann} for more details on the Boltzmann model.

\item \label{item:M5} Kinetic models in optimization ^^>\cite{benfenati2021binary} with $v,v_* \in \D\subset \Rd$ convex, bounded search domain with arbitrary dimension $d\in \mathbb{N}$. Let $\lambda,\sigma>0$ be positive parameters and $\theta$ be a zero-mean $d$-dimensional random variable, the binary interaction is given by
\[
\coll(v,v_*,\theta) = \Pi_{\D}\left[v + \lambda(v_{\beta}(v,v_*) - v) + \sigma (v_{\beta}(v,v_*) - v) \odot \theta \right] \]
where, $\Pi_\D[\cdot]$ is $\ell_2$-projection into $\D$ and, given the objective function $\E$ and $\beta>0$, $v_{\beta,\E}(v,v_*)$ is a weighted average 
$v_{\beta,\E}(v,v_*) := (e^{-\beta \E(v)}v +e^{-\beta \E(v_*)}v_* )/(e^{-\beta \E(v)}+ e^{-\beta \E(v)})$.
Under the assumption of the objective $\E$ being locally Lipschitz and $\D$ bounded, the weighted averages $v_{\beta, \E}(v,v_*)$ are also locally Lipschitz, see^^>\cite[Lemma 3.2]{carrillo2018analytical}. 
It follows that $\coll$ is also bi-Lipsichitz, while the linear growth condition \eqref{eq:lin} follows from $|v_{\beta, \E}(v,v_*)|\leq|v| + |v_*|$.
\end{enumerate}

\begin{remark} \label{rmk:boltzmann}
Whether the binary collision in the classical Boltzmann equation for dilute gases satisfies Assumption \ref{asm:coll} or not, depends on the parameterization used. For instance, the one used in \ref{item:M4} is bi-Lipschitz, while the one given by 
\begin{equation*}
\label{eq:boltzcoll}
\begin{split}
v' &= \frac{v + v_*}2 + \frac{|v + v_*|}2 \omega \\
v_*' &= \frac{v + v_*}2 - \frac{|v + v_*|}2 \omega \\
\end{split} \qquad \textup{for} \;\; \omega \in \R^3,\, |\omega| = 1
\end{equation*}
is not Lipschitz continuous in the variables $v,v_*\in \R^3$. Though, even with the parametrization \ref{item:M4},  it is not possible to directly apply Theorem \ref{t:nanbu} for arbitrary collisional kernels, as such parametrization makes $\b$ depend on the pre-collisional velocity $v,v_*$, that is, $\b = \b(v,v_*,e)$, also in the case of Maxwellian molecules, unless the specific scattering cross section mentioned in \ref{item:M4} is chosen.
The same situation applies to the Boltzmann equation for granular gases \cite{carrillo2005granular,wagner2007splitting}.

The line of work on DSMC methods typically considers simplified models with collision parameterized as in \ref{item:M4} and
Lipschitz kernels satisfying
\[
| \b(v,v_*,e) - \b(w,w_*,e)| \lesssim |v - v_*| + |w - w_*| 
\]
see^^>\cite[Theorem 4.1]{wagner1992bird} and^^>\cite[Section 3.4.2]{wagner2005book}. On the contrary, in^^>\cite{fournier2016,cortez2018} the analysis covers the more complex cases of hard potential and Maxwell molecules.
\end{remark}

\subsection{Convergence of Monte Carlo approximations}

Consider i.i.d. particles $W^i_n, i = 1,\dots,N$ which are exactly distributed according to the forward Euler iterate $f_n$,  and their associated empirical measure $\overline{f}_n^N= (1/N)\sum_i \delta_{W_n^i}$.  An upper bound of the Monte Carlo error for the Wasserstein-1 distance $\mathbb{W}_1$ (or, equivalenty, in terms of Kantorovich–Rubinstein norm $\|\cdot\|_{\KR}$) was derived in^^>\cite{fournier2015rate}. It holds
\begin{equation} \label{eq:iid}
\mathbb{E}\left\| f_n- {f}_n^N\right\|_\KR  \lesssim M_q^{1/q}(f_n)  \ve(N)
\end{equation}
with $M_q(f_n)$ being the $q$-th moment of $f_n$, and 
\begin{equation} \label{eq:eps1}
\ve(N):=
 \begin{cases}
N^{-1/2}  & \textup{if} \;\;d = 1 \;\;\;\textup{and} \;\; q> 2, \\
N^{-1/2}\log(1+N)  & \textup{if} \;\;d = 2 \;\;\;\textup{and} \;\; q > 2, \\
N^{-1/d} & \textup{if} \;\;d>2 \;\;\;\textup{and} \;\; q > d/(d-1)\,.
\end{cases}
\end{equation}
The constant hidden in \eqref{eq:iid} may depend only on $d$ and $q$ (see Theorem \ref{t:emp} below for the precise statement). It is important to note that the error in  \eqref{eq:iid} is given in terms of expected value because the empirical measure is a random variable since it depends on the $f_n$-distributed random variables $W^i_n$, $i = 1,\dots,N$.

The following error estimate shows that the Nanbu Monte Carlo method (Algorithm \ref{alg:nanbu}) is as efficient as a system of true i.i.d. particles with law $f_n$, which cannot be simulated due to the quadratic collisional operator.  


\begin{theorem}[Convergence of Nanbu method]
Let $f_0 \in \mathcal{P}_q(\Rd)$, with $q>2$ for $d \leq 2$ and $q> d/(d-1)$ for $d>2$,  and let $\b$ be a kernel such that $\b\in \mathcal{P}_{\infty}(\Theta)$.  Consider $f_n$ to be the forward Euler discretization \eqref{eq:euler} for $\Delta t\in(0,1]$ to the Boltzmann-like equation \eqref{eq:boltzmann}, and let $V_n^1, \dots, V_n^N$ be the Nanbu particle system defined by Algorithm \ref{alg:nanbu}, with $f_n^N$ being the corresponding empirical measure. 

For a given time horizon $T>0$, it holds
\[
\sup_{n, n\Delta t\in[0,T]} \mathbb{E} \left\|f_n -  f_n^N\right\|_\KR\leq  C \left(1 + Te^{T \tilde C}\right) M_q^{1/q}(f_0) \ve(N)
\] \label{t:nanbu}
with $M_q(f_0) = \int |v|^q f_0(v)\d v$,  $\ve(N)$ given by \eqref{eq:eps1}, and $C, \tilde{C}$ positive constants that depend only on $q,L_\coll, C_\coll, M_\infty(\b)$.
\end{theorem}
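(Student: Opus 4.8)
The plan is to couple the Nanbu particle system $(V_n^i)$ with an auxiliary system of i.i.d.\ particles $(W_n^i)$ distributed according to the forward Euler iterate $f_n$, and to control $\mathbb{E}\|f_n^N - f_n\|_\KR$ by splitting it through the intermediate empirical measure $\overline f_n^N = (1/N)\sum_i \delta_{W_n^i}$. By the triangle inequality,
\[
\mathbb{E}\|f_n^N - f_n\|_\KR \leq \mathbb{E}\|f_n^N - \overline f_n^N\|_\KR + \mathbb{E}\|\overline f_n^N - f_n\|_\KR \leq \frac1N\sum_{i=1}^N \mathbb{E}|V_n^i - W_n^i| + M_q^{1/q}(f_n)\,\ve(N),
\]
where the last term is handled directly by the i.i.d.\ Monte Carlo estimate \eqref{eq:iid}, and the first term requires building a good coupling. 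The key is to construct $W_n^i$ step by step \emph{alongside} Algorithm~\ref{alg:nanbu}: when a particle does not collide (probability $1-\Delta t$) we set $W_{n+1}^i = W_n^i$ and $V_{n+1}^i=V_n^i$; when it collides, we reuse the same random partner index $j$ and the same Bernoulli and angular randomness wherever possible, and we let $W_{n+1}^i = \coll(W_n^i, \widetilde W_n, \theta)$ where $\widetilde W_n$ is an independent $f_n$-distributed draw (so that $W_{n+1}^i$ is genuinely a sample from $Q^+(f_n,f_n)$, keeping the $W$-system i.i.d.\ and $f_{n+1}$-distributed), while optimally coupling $\widetilde W_n$ with $V_n^j$.

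The central estimate is a one-step recursion for $a_n := \frac1N\sum_i \mathbb{E}|V_n^i - W_n^i|$. Conditioning on whether particle $i$ collides and using the Lipschitz bound \eqref{eq:lip},
\[
\mathbb{E}\big[|V_{n+1}^i - W_{n+1}^i|\big] \leq (1-\Delta t)\,\mathbb{E}|V_n^i - W_n^i| + \Delta t\, L_\coll(1 + M_\infty(\b))\,\mathbb{E}\big[|V_n^i - W_n^i| + |V_n^j - \widetilde W_n|\big].
\]
Averaging over $i$, using exchangeability so that $\frac1N\sum_i\mathbb{E}|V_n^j - \widetilde W_n|$ can be bounded (for a uniformly random $j$) by $a_n + \mathbb{E}\|\overline f_n^N - f_n\|_\KR \lesssim a_n + M_q^{1/q}(f_n)\ve(N)$ via the optimal coupling of $\widetilde W_n$ with the empirical law and then with $f_n$, yields
\[
a_{n+1} \leq (1 + C'\Delta t)\, a_n + C''\Delta t\, M_q^{1/q}(f_n)\,\ve(N),
\]
with $C',C''$ depending on $L_\coll$ and $M_\infty(\b)$. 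Since $a_0 = 0$ (we can couple the initial samples to be identical), discrete Grönwall gives $a_n \lesssim T e^{T\tilde C}\,\big(\sup_{m\le n} M_q^{1/q}(f_m)\big)\,\ve(N)$.

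Two auxiliary ingredients must be in place. First, a uniform-in-$n$ moment bound $M_q(f_n) \leq C_q M_q(f_0)$ (up to a $T$-dependent factor), which follows by testing the Euler update \eqref{eq:euler} against $|v|^q$, using the growth bound \eqref{eq:lin} to control $\int|v|^q Q^+(f_n,f_n)$ by $M_q(f_n)$ times a constant depending on $C_\coll$ and $M_\infty(\b)$, and applying discrete Grönwall again; this is where the hypothesis $\b\in\mathcal{P}_\infty(\Theta)$ is used. Second, one must check that $f_n$ is a well-defined probability measure with finite $q$-th moment so that \eqref{eq:iid} applies at every step — immediate from \eqref{eq:euler} since it is a convex combination of probability measures. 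The main obstacle, and the only genuinely delicate point, is designing the coupling so that the $W$-system stays \emph{exactly} i.i.d.\ $f_n$-distributed at every step while remaining close to the $V$-system: the collision partner in the Nanbu scheme is a random particle from the finite ensemble, not an independent $f_n$-sample, so one must introduce the fresh draw $\widetilde W_n$ and absorb the resulting discrepancy $|V_n^j - \widetilde W_n|$ through the already-controlled i.i.d.\ approximation error, taking care that the averaging over the uniformly chosen index $j$ interacts correctly with exchangeability. Everything else is a routine combination of Lipschitz estimates, discrete Grönwall, and the quoted Monte Carlo rate \eqref{eq:iid}.
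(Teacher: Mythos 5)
Your proposal is correct and follows essentially the same route as the paper: the same triangle-inequality decomposition through the i.i.d.\ empirical measure $\overline f_n^N$, the same synchronous coupling of the Nanbu and nonlinear systems sharing the Bernoulli/index/parameter randomness with a fresh $f_n$-distributed partner optimally coupled to $V_n^{\j(\alpha)}$, and the same one-step Lipschitz recursion closed by the moment bound of Lemma~\ref{l:moment}, Theorem~\ref{t:emp}, and discrete Gr\"onwall. The one ingredient you flag but do not construct --- producing the partner $\widetilde W_n$ as a measurable function of $(\BV_n,\alpha)$ that is simultaneously $f_n$-distributed and optimally coupled to the uniformly chosen $V_n^{\j(\alpha)}$ --- is exactly what the paper's Lemma~\ref{l:coupling} supplies via measurable selection of optimal plans and randomization of the resulting kernel.
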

The proof we provide (Section \ref{sec:nanbu}) is based on an accurate coupling of the Nanbu system with an i.i.d. system of particles following the strategy proposed in^^>\cite{fournier2016}.

\subsection{Convergence of forward Euler and full error analysis}

We complement the error introduced by the Monte Carlo strategies with the error introduced by the forward Euler scheme \eqref{eq:euler}, which is of order $\Delta t$, as expected. The following theorem also includes a well-posedness result for the Cauchy problem \eqref{eq:boltzmann}.

\begin{theorem}[Convergence of forward Euler method] \label{t:euler} Let $(f_n)_{n\in \mathbb{N}}$ be constructed according to the  the explicit Euler iteration \eqref{eq:euler} with initial data $f_0\in \mathcal{P}_1(\Rd)$ and step size $\Delta t \in (0,1)$. For a given time horizon $T>0$, consider the interpolation $f^{\Delta t} \in \mathcal{C}([0,T], \mathcal{P}_1(\Rd))$, $f^{\Delta t}(t) := (1 - s)f_n + sf_{n+1}$ with $s = (t - n\Delta t)/\Delta t $ for $t \in [n\Delta t , (n+1)\Delta t]$.

Under Assumption \ref{asm:coll} and $\b\!\in\! \mathcal{P}_\infty(\Theta)$, $f^{\Delta t}$ converges to $f\!\in\! \textup{Lip}([0,T], \mathcal{P}_1(\Rd))$ as $\Delta t \to 0$, which is the unique measure solution (see Definition \ref{def:weaksol}) to \eqref{eq:boltzmann} over the time horizon $[0,T]$, and with initial data $f_0$. Moreover, it holds
\begin{equation}
\sup_{t\in [0,T]} \| f^{\Delta t }(t) - f(t) \|_{\KR} \leq C_{\textup{FE}} \Delta t
\end{equation}
for some positive constant $C_{\textup{FE}}$ which depends on $T,q,L_\coll,C_\coll, M_\infty(\b)$.
\end{theorem}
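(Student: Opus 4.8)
The plan is to prove convergence of the forward Euler scheme by a Cauchy-sequence argument in the complete metric space $\mathcal{C}([0,T], (\mathcal{P}_1(\Rd), \W_1))$, exploiting a one-step stability estimate for the map $f \mapsto (1-\Delta t)f + \Delta t Q^+(f,f)$ in the Kantorovich--Rubinstein norm. First I would establish the basic a priori bound: iterating \eqref{eq:euler} and using the linear growth condition \eqref{eq:lin} together with $\b \in \mathcal{P}_\infty(\Theta)$, the first moment $M_1(f_n)$ grows at most geometrically, $M_1(f_n) \leq e^{Cn\Delta t}M_1(f_0)$, so all iterates and their interpolants stay in a $\W_1$-bounded set on $[0,T]$; this also gives the Lipschitz-in-time bound $\W_1(f_{n+1},f_n) \leq C\Delta t$, hence equi-Lipschitz continuity of the family $\{f^{\Delta t}\}$. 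The key step is a \emph{one-step contraction-type estimate}: for two probability measures $f,g$ with bounded first moments, couple optimally and push the coupling through $Q^+$ using the Lipschitz bound \eqref{eq:lip}; since $Q^+(f,f)$ is (the law of) $\coll(V,V_*,\theta)$ with $V \sim f$, $V_* \sim f$ independent, $\theta \sim \b$, one gets
\[
\| Q^+(f,f) - Q^+(g,g)\|_{\KR} \leq 2 L_\coll (1 + M_\infty(\b)) \,\W_1(f,g),
\]
and therefore $\| [(1-\Delta t)f + \Delta t Q^+(f,f)] - [(1-\Delta t)g + \Delta t Q^+(g,g)]\|_{\KR} \leq (1 + C\Delta t)\W_1(f,g)$ with $C = 2L_\coll(1+M_\infty(\b))$.

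Next I would compare the scheme at step size $\Delta t$ with the one at step size $\Delta t/2$ (or, more cleanly, apply the standard one-step-consistency-plus-stability Lady Windermere's fan argument): the local truncation error of one Euler step is $O(\Delta t^2)$ in $\|\cdot\|_{\KR}$ — this follows by writing $f(t_{n+1}) - f(t_n) = \int_{t_n}^{t_{n+1}} Q(f(s),f(s))\,\d s$ and controlling $\| Q(f(s),f(s)) - Q(f(t_n),f(t_n))\|_{\KR} \leq C\,\W_1(f(s),f(t_n)) \leq C'|s-t_n|$ using the same Lipschitz estimates plus the time-regularity of a putative limit. Combining $n \leq T/\Delta t$ local errors of size $O(\Delta t^2)$, amplified by the stability factor $(1+C\Delta t)^n \leq e^{CT}$, yields the global bound $O(\Delta t)$. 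To make this rigorous without presupposing the limit, I would first show $\{f^{\Delta t}\}$ is Cauchy: compare $f^{\Delta t}$ and $f^{\Delta t'}$ directly through a discrete Gronwall inequality driven by the stability factor and the $O(\Delta t + \Delta t')$ mismatch accumulated over the grid, concluding existence of a limit $f \in \textup{Lip}([0,T],\mathcal{P}_1)$; then pass to the limit in the weak formulation \eqref{eq:operator} integrated in time (using that $Q$ is $\W_1$-continuous on bounded-moment sets and $\W_1$-convergence implies convergence against Lipschitz test functions) to identify $f$ as a weak measure solution in the sense of Definition \ref{def:weaksol}. Uniqueness follows from the same one-step-type estimate applied in continuous time: two weak solutions $f,g$ satisfy $\frac{\d}{\d t}\|f(t)-g(t)\|_{\KR} \leq C\|f(t)-g(t)\|_{\KR}$ in the appropriate (integrated) sense, so Gronwall forces $f = g$. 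Finally, the quantitative rate $\sup_t \| f^{\Delta t}(t) - f(t)\|_{\KR} \leq C_{\textup{FE}}\Delta t$ is read off from the telescoped local errors, with $C_{\textup{FE}}$ depending on $T$, on $L_\coll, C_\coll, M_\infty(\b)$ through the stability constant $C$, and on $M_1(f_0)$ through the moment growth; the dependence on $q$ enters only if one wants the sharper moment bounds, and is harmless.

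The main obstacle I anticipate is handling the nonlinearity carefully in the interpolation/consistency step: since $Q$ is quadratic, the local truncation error estimate needs the time-continuity of the \emph{exact} solution (or of the interpolant), which is slightly circular before existence is established. The clean way around this is to avoid the exact solution entirely in the first phase — prove $\{f^{\Delta t}\}$ is Cauchy purely by comparing two discretizations via the discrete stability estimate, extract the limit, and only then verify it solves the equation and matches the rate. A secondary technical point is that $\|\cdot\|_{\KR}$ equals $\W_1$ only on probability measures with equal (here, unit) mass and finite first moment; one must check the Euler iterates remain probability measures — which they do, since $(1-\Delta t) \geq 0$, $\Delta t \geq 0$, $Q^+(f,f)$ is a probability measure, and the convex combination preserves total mass one — so that all $\|\cdot\|_{\KR}$ quantities are genuine $\W_1$ distances and the optimal-coupling argument is available.
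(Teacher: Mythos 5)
Your proposal is correct and follows essentially the same route as the paper: the same $\W_1$-Lipschitz estimate for $Q^+$ obtained by pushing an optimal coupling through the collision map, a Cauchy-sequence argument on the interpolants (the paper uses the dyadic comparison $\Delta t$ vs.\ $\Delta t/2$) to avoid the circularity you flag, identification of the limit as the unique weak measure solution via the time-integrated weak formulation and Gr\"onwall, and finally the consistency-plus-stability telescoping to get the $O(\Delta t)$ rate. No substantive gaps.
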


The constructive proof follows standard techniques of well-posedness for weak measure solutions, see, for instance, ^^>\cite{piccoli2019signed}. Thanks to the simple structure of the kinetic model \eqref{eq:boltzmann} of interest, we conjecture that the existence result can be extended to strong measure solutions, or even strong density solutions ^^>\cite{lu2012measure}.

Finally, we combine the above results to obtain a full error analysis.
\begin{corollary}[Full error analysis] \label{c:full}
Let $f_0 \in \mathcal{P}_q(\Rd)$, with $q>2$ for $d \leq 2$ and $q > d/(d-1)$ for $d>2$,  and let $\b$ be a kernel such that $\b\in \mathcal{P}_{\infty}(\Theta)$. Construct $V_n^1, \dots, V_n^N$ with the Nanbu Algorithm \ref{alg:nanbu} with time step $\Delta t \in(0,1)$, $N$ particles, and initial data $f_0$. 

Under Assumption \ref{asm:coll}, let $f_n^N$ being the corresponding empirical measure and $f \in \textup{Lip}([0,T],\mathcal{P}_1(\Rd))$ be the unique measure solution to \eqref{eq:boltzmann} over $[0,T]$ with $f(0) = f_0$. It holds 
\begin{equation}
\sup_{t_n = n\Delta t \in [0,T]}\mathbb{E}\left\| f^N_{n} -  f(t_n)  \right\|_\KR \leq C_{\textup{FE}} \Delta t  + C_{\textup{MC}}\, \varepsilon(N)
\end{equation}
with $C_{\textup{FE}},C_{\textup{MC}}>0$ constants depending on $T,q,L_\coll, C_\coll, M_\infty(\b), M_q(f_0)$, and $\ve(N)$ given by \eqref{eq:eps1}.
\end{corollary}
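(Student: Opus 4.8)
The plan is to obtain Corollary~\ref{c:full} by combining Theorem~\ref{t:nanbu} and Theorem~\ref{t:euler} through the triangle inequality for the Kantorovich--Rubinstein norm. First I would check that the hypotheses of both results are in force: since $q>1$ in every regime of \eqref{eq:eps1}, one has $\mathcal{P}_q(\Rd)\subset\mathcal{P}_1(\Rd)$ (Jensen's inequality for probability measures), so $f_0\in\mathcal{P}_q(\Rd)$ is admissible for both theorems, and Assumption~\ref{asm:coll} together with $\b\in\mathcal{P}_\infty(\Theta)$ is precisely what each statement requires. In particular Theorem~\ref{t:euler} provides the unique measure solution $f\in\textup{Lip}([0,T],\mathcal{P}_1(\Rd))$ with $f(0)=f_0$ that appears in the corollary, so the object $f$ in the statement is well defined.

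Next, fix $n$ with $t_n=n\Delta t\in[0,T]$ and insert the forward Euler iterate $f_n$ from \eqref{eq:euler}:
\[
\big\|f_n^N-f(t_n)\big\|_\KR \;\leq\; \big\|f_n^N-f_n\big\|_\KR + \big\|f_n-f(t_n)\big\|_\KR .
\]
Taking expectations and then the supremum over admissible $n$, the first term is controlled by Theorem~\ref{t:nanbu}, which yields $\sup_{n\Delta t\in[0,T]}\mathbb{E}\|f_n-f_n^N\|_\KR \leq C(1+Te^{T\tilde C})M_q^{1/q}(f_0)\,\ve(N)$; I would simply set $C_{\textup{MC}}:=C(1+Te^{T\tilde C})M_q^{1/q}(f_0)$, which depends only on $T,q,L_\coll,C_\coll,M_\infty(\b),M_q(f_0)$, as claimed. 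For the second term I would use that the piecewise-linear interpolation $f^{\Delta t}$ of Theorem~\ref{t:euler} agrees with the iterates at the grid points, $f^{\Delta t}(n\Delta t)=f_n$ (the case $s=0$); hence $\|f_n-f(t_n)\|_\KR=\|f^{\Delta t}(t_n)-f(t_n)\|_\KR\leq\sup_{t\in[0,T]}\|f^{\Delta t}(t)-f(t)\|_\KR\leq C_{\textup{FE}}\Delta t$. This contribution is deterministic, so taking expectations leaves it unchanged, and adding the two bounds gives the asserted estimate.

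Since the argument is essentially a one-line triangle inequality, there is no genuine obstacle here; the only points that require a little care are the bookkeeping of constant dependencies (so that $C_{\textup{MC}}$ and $C_{\textup{FE}}$ carry exactly the dependencies stated), and the elementary but necessary observation that the interpolation $f^{\Delta t}$ coincides with $f_n$ at $t=t_n$, which is what allows the forward-Euler estimate of Theorem~\ref{t:euler} to be invoked at the discrete times $t_n$.
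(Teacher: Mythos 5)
Your proposal is correct and coincides with the paper's (implicit) argument: the corollary is obtained exactly by inserting the forward Euler iterate $f_n$, applying the triangle inequality, and invoking Theorem~\ref{t:nanbu} for the Monte Carlo term and Theorem~\ref{t:euler} (evaluated at the grid points, where $f^{\Delta t}(t_n)=f_n$) for the time-discretization term. The bookkeeping of the constants and the observation that the second term is deterministic are handled as the paper intends.
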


\subsection{Extension to Time Relaxed Monte Carlo methods}

It is common in kinetic models to encounter different time scales between macroscopic and microscopic effects. 
When collisions occur at a high rate, a full kinetic treatment becomes computationally expensive due to the large separation of time scales. Moreover, it is often unnecessary, as macroscopic quantities can be accurately described by the system’s asymptotic configuration.

The paradigmatic example is a gas near thermodynamical equilibrium, where the relevant time scale is determined by the Knudsen number—the ratio of the molecular mean free path to the characteristic length scale of macroscopic variations. For large Knudsen numbers, the evolution of the system is well described by the Boltzmann kinetic model, while for small Knudsen numbers the distribution function is close to the asymptotic configuration given by the local Maxwellian.

Consider the rescaled collisional dynamics with scale parameter $\epsilon>0$
\begin{equation} \label{eq:botlzmanneps}
\frac{\partial f}{\partial t} (v,t) = \frac1\epsilon Q(f,f)(v,t)\,.
\end{equation}
When applying the forward Euler scheme \eqref{eq:euler} $f_{n+1} = ( 1- \Delta t/ \epsilon ) f_n + \Delta t/\epsilon Q^+(f_n,f_n)$ one loses the probabilistic Nanbu interpretation of the update unless the restrictive condition $\Delta t \in (0, \epsilon)$ is satisfied. To overcome this situation, the authors in^^>\cite{gabetta1997,pareschi2001trmc} proposed a novel class of Monte Carlo schemes, the Time Relaxed Monte Carlo (TRMC) methods which do not require the condition on $\Delta t $, and are able to capture the asymptotic behaviour of the system, as $\epsilon \to 0$.

Let $f_\infty$ bet the steady associated to \eqref{eq:botlzmanneps}, from which we assume we can draw samples. Consider the parameter $\tau  = 1 - \exp(-\Delta t /\epsilon)$, the first order TRMC method is based on the iterative scheme
\begin{equation} \label{eq:trmc}
f_{n+1} = (1 - \tau)f_n + (1- \tau)\tau Q^+(f_n,f_n)  + \tau^2 f_\infty\,.
\end{equation}
For any $\Delta t, \epsilon>0$, we recover the probabilistic interpretation as $f_{n+1}$ is given by a convex combination of $f_n$, $Q^+(f_n,f_n)$, and $f_\infty$. The TRMC method illustrated in Algorithm \ref{alg:trmc} exploits this, together with the Nanbu strategy to sample from $Q^+(f_n,f_n)$. Note that \eqref{eq:trmc} is asymptotic preserving, as $f_{n+1}$ is relaxed to $f_\infty$ as $\epsilon \to 0$ for fixed $\Delta t$. 

As for the Nanbu algorithm, the TRMC method is as efficient as a system of i.i.d. particles in approximating the correspondent time discrete dynamics \eqref{eq:trmc}.

\begin{theorem}
[Convergence of first order TRMC method]
Let $f_0 \in \mathcal{P}_q(\Rd)$, with $q>2$ for $d \leq 2$ and $q> d/(d-1)$ for $d>2$,  and let $\b$ be a kernel such that $\b\in \mathcal{P}_{\infty}(\Theta)$.  Consider $f_n$ to be the  discretization \eqref{eq:trmc} for $\Delta t\in(0,1]$ to equation \eqref{eq:botlzmanneps} with $f_\infty\in\mathcal{P}_q(\Rd)$, and let $V_n^1, \dots, V_n^N$ be the TRMC particle system defined by Algorithm \ref{alg:trmc}, with $f_n^N$ being the corresponding empirical measure. 

For a given time horizon $T>0$, it holds
\[
\sup_{n, n\Delta t\in[0,T]} \mathbb{E} \left\|f_n -  f_n^N\right\|_\KR\leq  C \left(1 + Te^{T \tilde C}\right) M_q^{1/q}(f_0) \ve(N)
\]\label{t:trmc} 
with $M_q(f_0) = \int |v|^q f_0(v)\d v$,  $\ve(N)$ given by \eqref{eq:eps1}, and $C, \tilde{C}$ positive constants that depend only on $q,L_\coll, C_\coll, M_\infty(\b)$.
\end{theorem}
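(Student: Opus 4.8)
The statement is word-for-word the same as Theorem \ref{t:nanbu}, so the plan is to recycle that proof, tracking the single structural change. The key observation is that $(1-\tau)+(1-\tau)\tau+\tau^2=1$, so the TRMC update \eqref{eq:trmc} is again a convex combination; writing $\mathcal{T}(f):=(1-\tau)f+(1-\tau)\tau\,Q^+(f,f)+\tau^2 f_\infty$, Algorithm \ref{alg:trmc} is precisely the particle realization of the Markov chain $f_n^N\mapsto f_{n+1}^N$ with one-step conditional mean $\mathcal{T}(f_n^N)$: independently for each $i$, the particle $V_n^i$ is kept (probability $1-\tau$), replaced by a collision outcome $\coll(V_n^i,V_n^{j(i)},\theta)$ with $j(i)$ uniform and $\theta\sim\b$ (probability $(1-\tau)\tau$), or resampled from $f_\infty$ (probability $\tau^2$). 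Since $\coll$ satisfies Assumption \ref{asm:coll} and $\b\in\mathcal{P}_\infty(\Theta)$, the map $f\mapsto Q^+(f,f)$ is Lipschitz in $\W_1$ with constant controlled by $L_\coll(1+M_\infty(\b))$, hence so is $\mathcal{T}$.

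First I would set up the coupling. Let $W_n^1,\dots,W_n^N$ be i.i.d. with common law $f_n$ (the TRMC iterate), realized on the same probability space as $(V_n^i)$ in such a way that: (i) the ternary per‑particle decision and the parameter $\theta_n^i$ are shared by the two systems; (ii) on the resampling branch both systems draw the \emph{same} $f_\infty$‑sample; (iii) on the collision branch the partner of $W_n^i$ is a fresh $f_n$‑distributed sample $\widetilde W_n^i$, independent of $W_n^i$ but coupled to $V_n^{j(i)}$ following the construction of \cite{fournier2016} used in Section \ref{sec:nanbu}. By construction $(W_n^i)$ stays i.i.d.\ with law $\mathcal{T}(f_{n-1})=f_n$ at every step, so by the triangle inequality and the empirical–measure estimate \eqref{eq:iid} (Theorem \ref{t:emp}) it suffices to control $a_n:=\mathbb{E}\big[\tfrac1N\sum_i|V_n^i-W_n^i|\big]$, because $\mathbb{E}\|f_n^N-f_n\|_\KR\le a_n+M_q^{1/q}(f_n)\,\ve(N)$.

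Next I would derive the one-step recursion. Conditioning on the shared decisions: on the ``keep'' event $|V_{n+1}^i-W_{n+1}^i|=|V_n^i-W_n^i|$; on the ``resample'' event the difference vanishes; on the ``collide'' event Assumption \ref{asm:coll} gives $|V_{n+1}^i-W_{n+1}^i|\le L_\coll(1+|\theta_n^i|)\big(|V_n^i-W_n^i|+|V_n^{j(i)}-\widetilde W_n^i|\big)$, and the partner term splits, after taking expectations and using exchangeability, into $a_n$ (comparison of $V_n^{j(i)}$ with $W_n^{j(i)}$, averaged over the uniform index) plus the Monte-Carlo fluctuation of the i.i.d.\ partners, of size $O\!\big(M_q^{1/q}(f_n)\ve(N)\big)$ by Theorem \ref{t:emp}. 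Using $M_1(\b)\le M_\infty(\b)$ and the branch probabilities $(1-\tau)\tau\le\tau$, this yields $a_{n+1}\le(1+\tilde C\tau)\,a_n+\tilde C\,\tau\,M_q^{1/q}(f_n)\,\ve(N)$ for $\tilde C=\tilde C(q,L_\coll,C_\coll,M_\infty(\b))$. I would close the argument with two ingredients: a uniform moment bound $\sup_n M_q(f_n)<\infty$ over the relevant range of $n$, obtained by applying the convex-combination structure of \eqref{eq:trmc} to $v\mapsto|v|^q$ together with the linear growth \eqref{eq:lin}, $\b\in\mathcal{P}_\infty(\Theta)$ and $f_\infty\in\mathcal{P}_q(\Rd)$ (this is where the hypothesis $f_\infty\in\mathcal{P}_q$ enters) and a discrete Grönwall lemma, which turns the recursion with $a_0=0$ into $a_n\le\tilde C\,(n\tau)\,e^{\tilde C n\tau}\,M_q^{1/q}(f_0)\,\ve(N)$, i.e.\ the stated bound once $n\tau$ is controlled over the horizon $n\Delta t\le T$.

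The main obstacle is exactly the one already met in Theorem \ref{t:nanbu}: the coupling of the collision partners. The i.i.d.\ system must simultaneously stay \emph{exactly} i.i.d.\ with law $f_n$ (so that \eqref{eq:iid}/Theorem \ref{t:emp} applies) and have its $i$-th particle's partner as close as possible, in $\W_1$, to the empirical partner $V_n^{j(i)}$; reconciling these is the delicate point of \cite{fournier2016} carried out in Section \ref{sec:nanbu}. The only genuinely new bookkeeping for TRMC is to check that the extra resampling branch — trivial for the coupling distance, since both systems share the $f_\infty$-sample — does not spoil moment propagation, which is precisely why $f_\infty\in\mathcal{P}_q(\Rd)$ is assumed; apart from this, every estimate is the one for the Nanbu method with $\Delta t$ replaced throughout by $\tau=1-e^{-\Delta t/\epsilon}$.
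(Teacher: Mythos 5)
Your proposal matches the paper's argument: the paper proves Theorem~\ref{t:trmc} by coupling the TRMC system \eqref{eq:syst:trmc} with the nonlinear i.i.d.\ system \eqref{eq:syst:trmcnon} exactly as in Theorem~\ref{t:nanbu}, sharing the branch indicators, the collision parameters, the partner coupling of Lemma~\ref{l:coupling}, and the same $f_\infty$-samples $M_n^i$ so the resampling branch contributes nothing to the coupling distance, and arrives at the same one-step recursion $(1+\tau C_1)\mathbb{E}|V_n^i-W_n^i|+\tau(1-\tau)C_2\,\W_1(f_n^N,f_n)$ that you derive. Your additional remark that $f_\infty\in\mathcal{P}_q(\Rd)$ is needed for moment propagation is a correct piece of bookkeeping that the paper leaves implicit.
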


\begin{algorithm}[t]
\caption{First order Time Relaxed Monte Carlo method}
\label{alg:trmc}
\begin{algorithmic}
\STATE{Sample $N$ particles $V^i_0$, $i = 1, \dots,N$ from $f_0$}
\STATE{$n = 0$}
\WHILE{$n\Delta t < T$}
\STATE{with probability $1 - \tau(\Delta t):$}
\STATE{\hspace{1cm}  $V_{n+1}^i = V_{n}^i$ }
\STATE{with probability $(1-\tau(\Delta t))\tau(\Delta t):$}
\STATE{\hspace{1cm}  select a random particle $j$}
\STATE{\hspace{1cm}  sample a parameter $\theta$ from $\b(\theta)$}
\STATE{\hspace{1cm}  $V^i_{n+1} =\coll(V_n^i, V_n^j, \theta)$}
\STATE{with probability $\tau(\Delta t)^2:$}
\STATE{\hspace{1cm} sample a particle $v$ from equilibrium $f_\infty$}\STATE{\hspace{1cm}  $V_{n+1}^i = v$}
\STATE{$n = n+1$}
\ENDWHILE
\end{algorithmic}
\end{algorithm}

\begin{remark}
Scheme \eqref{eq:trmc} represents only a sub-class of first order TRMC methods. TRMC schemes were derived^^>\cite{gabetta1997,pareschi2001trmc,caflisch1999trmc} for the Boltzmann equation starting from the solution representation via Wild's sums^^>\cite{wild1951,carlen2000}. The general high-order scheme of order $m\geq 1$ takes the form 
\begin{equation*}
f_{n+1} = \sum_{k=0}^m A_k(\tau) f^k_n + A_{m+1}(\tau)f_\infty
\end{equation*}
with $f^n_k$ recursively defined as $f_n^{k+1} = \sum_{h=0}^k Q^+(f_n^h,f^{k-h}_h)/(k+1)$, and $A_k(\tau), k =0,1,\dots,m+1$ positive weights satisfying certain admissibility criteria  as $\tau \to0$ and $\tau \to 1$ (see^^>\cite[Proposition 3.2]{pareschi2001trmc}). Generalizations where the weights have been computed using Runge-Kutta methods have been presented in^^>\cite{Dimarco2011}. We conjecture that convergence results of type \eqref{t:trmc} can be extended to these higher order TRMC methods by iterating the same argument of the proof to the higher order terms $f_n^h$, $h= 2, \dots, m$. 
\end{remark}

\begin{remark} TRMC methods belong to the class of Asymptotic Preserving (AP) schemes^^>\cite{jin2010ap}, which remain efficient even in the asymptotic regime $\epsilon \to 0$. A comprehensive error analysis, including the dependence on the time step $\Delta t$ and parameter $\epsilon>0$, must account for the convergence rate to the asymptotic state $f_{\infty}$ as $\epsilon \to 0$ and $t \to \infty$. This convergence behavior is generally model-dependent. For a detailed framework for deriving such error estimates, we refer the reader to^^>\cite{jin2010ap}.
\end{remark}

\section{Preliminaries and notation}
\label{sec:prel}

We indicate with  $\mathcal{P}(\Rd)$ the set of Borel probability measures over $\Rd$, and with $M_p(f) := (\int |v|^p f)^{1/p}$, $p \in (1,\infty)$, the $p$-th moment of $f\in \mathcal{P}(\Rd)$. For $p =\infty$, we set $M_{\infty}(f) = \sup_{v\in \supp(f)} |v|$. The set of probability measures with bounded moments up to $p$ is denoted with $\mathcal{P}_p(\Rd)$, and $\mathcal{P}^{ac}_p(\Rd) \subset \mathcal{P}_p(\Rd)$ is the one that only includes probability measures absolutely continuous with respect to Lebesgue. We will sometime abuse the notation and indicate the density of $f\in\mathcal{P}_p^{ac}(\Rd)$ again with $f$. For a measurable function $\phi: \Rd \to \Rm$ and $f\in \mathcal{P}(\Rd)$, $\phi_\#f \in \mathcal{P}(\Rm)$ is the push-forward measures defined by $\phi_\#f (A) = f(\phi^{-1}(A))$ for any open set $A\subset \Rm$. For any bounded measurable set $A \subset \Rd$ we denote with $|A|$ its Lebesgue measure. With $\mathcal{C}_0^\infty(\Rd)$ we denote the set of smooth and compactly supported test function, while $\textup{Lip}_1(\Rd)$ is the set of Lipschitz functions with Lipschitz constant $\|\phi \|_{\textup{Lip}} \leq 1$. Given a test function $\phi$ and a Radon measure $\mu$ defined on the same space, we will sometimes use the compact notation  $\langle \phi,\mu\rangle := \int\phi(v) \mu(\d v)$.

If not specified, random variables are taken from an abstract probability space $(\tilde \Omega, \mathcal{F}, \mathbb{P})$. Following^^>\cite{tanaka1978,fournier2016}, we will sometimes use an auxiliary probability space given, for instance, by $[0,N)$, for some $N\in \mathbb{N}$, with the Borel $\sigma$-algebra and normalized Lebesgue measure. We call random variables defined on this auxiliary space $\alpha$-random variables.

We say $X\sim f$ for $f\in\mathcal{P}(\Rd)$ if the law of the random variable is $f$, and sometimes write $\law(f) = X$. With $\unif(A)$, we indicate the uniform probability measure over a bounded measurable set $A\subset\Rd$, 
and $\bern(\tau)$, $\tau \in [0,1]$ is the Bernoulli distribution, that is, $\bern(\tau) = (1-\tau)\delta_0 + \tau \delta_1$, $\delta_x$ being the Dirac delta probability measure centered in $x\in \Rd$. For $A\in \mathcal{F}$, $\mathbf{1}_A$  is the indicator function $\mathbf{1}_A(\omega) = 1$ if $\omega\in A$, and $\mathbf{1}_A(\omega) = 0$ otherwise. 

Given $f, g \in \mathcal{P}_p(\Rd)$, we consider the Wasserstein distance with exponent $p\geq 1$ 
\begin{equation}\label{eq:wass}
\W_p(f,g) := \left(\min_{\gamma \in \Gamma(f, g)} \int |v - w|^p \;\gamma(\d v, \d w) \right)^{1/p}
\end{equation}
where $\Gamma(f,g)$ is the set of transport plans between $f$ and $g$.  We recall that $\W_p(\cdot,\cdot)$ metrizes weak convergence in duality with continuous bounded functions, and that, for $p = 1$, the dual formulation reads
\begin{equation} \label{eq:wass_dual}
\W_1(f,g) =  \max \left \{ \int \phi (v) f(\d v) -  \int \phi (w) g(\d w) \;:\; \phi \in \textup{Lip}_1(\Rd)  \right\}\,.
\end{equation}
If not stated differently, we consider $p = 1$ and indicate with $\Gamma_o(f,g)$ the set of couplings that are optimal with respect to the $\ell_1$ cost $|v-w|$. We refer to the book^^>\cite{villani2009} for more details on Wasserstein distances and their properties.
From the dual formulation \eqref{eq:wass_dual}, we can see that $\W_1(f,g)  = \|f - g\|_\KR$ where $\|\cdot\|_{\KR}$ is the previously introduced Kantorovich--Rubinstein norm 
$
\|  \mu \|_\KR:= \sup \left \{ \int_{\Rd} \phi(v)\mu(\d v)\, :\, \phi \in \textup{Lip}_1(\Rd)  \right\},
$
for any signed Radon measure $\mu\in\mathcal{M}(\Rd)$.

Consider $\mu \in \mathcal{P}(\Rd)$ and its empirical approximation $\mu \in \mathcal{P}(\mathcal{P}(\Rd))$ given by
$
\mu^N = 1/N\sum_{i=1}^N \delta_{W^i}$
with
$W^i \sim f$ i.i.d.$\,$.
We recall some error bounds in terms of Wasserstein distances.
\begin{theorem}[{\cite[Theorem 1]{fournier2015rate}}] \label{t:emp}
Let $\mu \in \mathcal{P}(\Rd)$ and let $p>0$. Assume that $M_q(\mu)<\infty$ for some $q>p$.  There exists a constant $C$ depending only on $p,d,q$ such that for all $N \geq 1$:
$
\mathbb{E}\left[  \W_p(\mu,\mu^N)\right] \leq C M_q^{p/q}(\mu) \ve_{p}(N)
$
with 
\begin{equation*}
\ve_{p}(N):=
\begin{cases}
N^{-1/2} + N^{-(q-p)/q} & \textup{if} \;\;p>d/2 \;\;\;\textup{and} \;\; q \neq 2p, \\
N^{-1/2}\log(1+N) + N^{-(q-p)/q} & \textup{if} \;\;p=d/2 \;\;\;\textup{and} \;\; q \neq 2p, \\
N^{-p/d} + N^{-(q-p)/q} & \textup{if} \;\;p\in(0,d/2) \;\;\;\textup{and} \;\; q \neq d/(d-p).
\end{cases}
\end{equation*}

\end{theorem}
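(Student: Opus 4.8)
This is the Fournier--Guillin rate of convergence of the empirical measure, and the plan is to bound $\mathbb{E}\big[\W_p^p(\mu,\mu^N)\big]$ (this is the content of the estimate, and coincides with the displayed quantity for $p=1$, which is the case actually used here). The whole argument reduces the transport cost to a weighted sum, over dyadic scales, of the coarse-grained mass discrepancies $|\mu(F)-\mu^N(F)|$, which are then estimated in expectation scale by scale. Three ingredients combine: a purely deterministic multiresolution inequality, the binomial concentration of the empirical masses, and the tail decay furnished by $M_q(\mu)$; the first produces the dimension-dependent leading term and the last produces the additive moment term.

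The deterministic core is a multiscale inequality. For a cube $Q$ of side $R$, writing $\mathcal{Q}_\ell$ for its $2^{d\ell}$ dyadic subcubes of side $R2^{-\ell}$, I claim that for all $\mu,\nu\in\mathcal{P}(Q)$,
\[ \W_p^p(\mu,\nu)\le C_{d,p}\,R^p\sum_{\ell\ge0}2^{-p\ell}\sum_{F\in\mathcal{Q}_\ell}|\mu(F)-\nu(F)|. \]
I would prove this by constructing a transport plan hierarchically along the dyadic tree: within each finest cube one matches as much $\mu$-mass to $\nu$-mass as possible at cost $\lesssim(R2^{-\ell})^p$ per unit mass, and the unmatched surplus or deficit is carried up to the parent cube, to be matched there at the next coarser scale. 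The cost incurred at level $\ell$ is then $(R2^{-\ell})^p$ times the mass forced to cross at that level, and the key bookkeeping is to show this crossing mass is controlled by $\sum_{F\in\mathcal{Q}_\ell}|\mu(F)-\nu(F)|$. This lemma is the heart of the proof and the step I expect to be most delicate.

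With the lemma in hand the probabilistic input is elementary: $N\mu^N(F)\sim\mathrm{Bin}(N,\mu(F))$, so $\mathbb{E}|\mu(F)-\mu^N(F)|\le\sqrt{\mathrm{Var}\,\mu^N(F)}\le\sqrt{\mu(F)/N}$, while trivially $\mathbb{E}|\mu(F)-\mu^N(F)|\le2\mu(F)$; hence $\mathbb{E}|\mu(F)-\mu^N(F)|\le\min\!\big(2\mu(F),\sqrt{\mu(F)/N}\big)$. On the unit cube, Cauchy--Schwarz over the $2^{d\ell}$ cubes of level $\ell$ gives $\sum_{F\in\mathcal{Q}_\ell}\mathbb{E}|\mu(F)-\mu^N(F)|\le\min\!\big(2,N^{-1/2}2^{d\ell/2}\big)$, so that
\[ \mathbb{E}\big[\W_p^p(\mu,\mu^N)\big]\le C\sum_{\ell\ge0}2^{-p\ell}\min\!\big(2,\,N^{-1/2}2^{d\ell/2}\big). \]
Splitting the series at the crossover $2^{\ell}\sim N^{1/d}$ and summing the two geometric tails produces exactly the three regimes of the leading term of $\ve_p(N)$: the low-$\ell$ part is summable and yields $N^{-1/2}$ when $p>d/2$, it yields $N^{-1/2}\log(1+N)$ at the borderline $p=d/2$, and it yields $N^{-p/d}$ when $p<d/2$; the exclusions $q\ne2p$ and $q\ne d/(d-p)$ only rule out logarithmic borderline cases in these sums.

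It remains to drop the boundedness assumption, which is where $M_q$ enters. I would decompose $\Rd$ into dyadic shells $\mathcal{A}_0=\{|v|\le1\}$ and $\mathcal{A}_k=\{2^{k-1}<|v|\le2^k\}$, apply the multiscale inequality inside each shell (of diameter $\sim2^k$, hence cost prefactor $\sim2^{kp}$), and bound the shell masses by Chebyshev, $\mu(\mathcal{A}_k)\lesssim 2^{-qk}$ up to a $q$-th-moment factor. Shells with $N\mu(\mathcal{A}_k)\gtrsim1$ feed into the bulk estimate above, while the sparsely populated far shells, where $N\mu(\mathcal{A}_k)\lesssim1$, are handled with the crude bound $\mathbb{E}|\mu(F)-\mu^N(F)|\le2\mu(F)$: their total contribution is $\lesssim\sum_{2^k\gtrsim R}2^{kp}\mu(\mathcal{A}_k)\lesssim M_q^{p/q}\,N^{-(q-p)/q}$ once the crossover radius $R\sim N^{1/q}$ (up to moment factors) is chosen, which is precisely the additive moment term, with the prefactor $M_q^{p/q}$ appearing in the statement. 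Balancing the two contributions collects the full bound. The only genuinely involved step is the multiscale transport lemma of the second paragraph; the concentration estimate, the geometric summation, and the shell bookkeeping are routine.
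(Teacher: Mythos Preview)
The paper does not prove this theorem at all: it is quoted verbatim from Fournier--Guillin (reference \texttt{fournier2015rate}) as a background tool, with no argument given. Your sketch is a faithful outline of the proof in that original reference---the dyadic multiscale transport inequality, the binomial/Cauchy--Schwarz estimate per level, the trichotomy from summing the geometric series, and the shell decomposition producing the additive $N^{-(q-p)/q}$ term---so there is nothing to compare against in the present paper, and your approach is the standard one.
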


Note that the order of convergence given by \eqref{eq:eps1} is a consequence of the above theorem with $p=1$.
Also, it tells us that the error introduced by any Monte Carlo strategy is related to the moments of the kinetic density. Therefore, we provide an exponential bound on the $q$-th moment of the Euler discretization $f_n$ relying only on Assumption \ref{asm:coll}. 

\begin{lemma}[Moments estimate]\label{l:moment}
Assume $f_{0} \in \mathcal{P}_q(\Rd)$, for $q \geq 1$ and $\b \in \mathcal{P}_\infty(\Theta)$, and let $f_n$ be the forward Euler discretization defined by \eqref{eq:euler}.  If the collision maps $\coll$ satisfies Assumption \ref{asm:coll}, then 
\begin{equation}
M_q^{1/q}(f_n) \leq e^{Cn\Delta t} M_q^{1/q}(f_0)
\end{equation}
with $C>0$ a constant depending only on $q, C_\coll, M_{\infty}(b)$.
\end{lemma}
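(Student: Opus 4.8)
The plan is to track the evolution of $M_q^q(f_n) = \int |v|^q f_n(v)\,\d v$ along the forward Euler iteration \eqref{eq:euler} and derive a discrete Grönwall-type inequality. First I would use the convex-combination structure $f_{n+1} = (1-\Delta t) f_n + \Delta t\, Q^+(f_n,f_n)$ together with the weak formulation to write, testing against $\phi(v) = |v|^q$,
\[
M_q^q(f_{n+1}) = (1-\Delta t) M_q^q(f_n) + \Delta t \int |v|^q Q^+(f_n,f_n)(v)\,\d v,
\]
and then express the gain term via the collisional operator identity: from \eqref{eq:operator}, $\int \phi\, Q^+(f,f) = \frac12\iiint \b(\theta)(\phi(v') + \phi(v'_*)) f(v) f(v_*)\,\d\theta\,\d v\,\d v_*$. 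Taking $\phi = |\cdot|^q$ and using the symmetry in $v \leftrightarrow v_*$, this reduces to $\iiint \b(\theta)\,|\coll(v,v_*,\theta)|^q f(v) f(v_*)\,\d\theta\,\d v\,\d v_*$.

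The key estimate is then the linear growth bound \eqref{eq:lin}: $|\coll(v,v_*,\theta)| \le C_\coll (1+|\theta|)(|v| + |v_*|)$, so that $|\coll(v,v_*,\theta)|^q \le C_\coll^q (1+|\theta|)^q (|v|+|v_*|)^q$. Since $\b \in \mathcal{P}_\infty(\Theta)$, we have $(1+|\theta|)^q \le (1+M_\infty(\b))^q$ on $\supp(\b)$, which pulls out a constant. For the $(|v|+|v_*|)^q$ factor I would use the elementary inequality $(a+b)^q \le 2^{q-1}(a^q + b^q)$, integrate against $f_n(v) f_n(v_*)$, and use $\int f_n = 1$ to get $\iint (|v|+|v_*|)^q f_n f_n \le 2^q M_q^q(f_n)$. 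Collecting terms yields
\[
M_q^q(f_{n+1}) \le (1-\Delta t) M_q^q(f_n) + \Delta t\, K\, M_q^q(f_n) = \big(1 + (K-1)\Delta t\big) M_q^q(f_n)
\]
with $K = C_\coll^q 2^q (1+M_\infty(\b))^q$ depending only on $q, C_\coll, M_\infty(\b)$. Iterating and using $1 + x \le e^x$ gives $M_q^q(f_n) \le e^{(K-1)n\Delta t} M_q^q(f_0)$ — note the $(1-\Delta t)$ only helps, so we may bound crudely by $e^{Kn\Delta t}$ — and taking $q$-th roots gives the claim with $C = (K-1)/q$ (or simply $C = K/q$).

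I do not expect a serious obstacle here; the argument is routine once the weak formulation is set up. The one point requiring minor care is justifying that $\phi(v) = |v|^q$ is an admissible test function in \eqref{eq:operator} despite being unbounded — this is standard and can be handled by a truncation/monotone-convergence argument, or by observing that the iteration \eqref{eq:euler} is an explicit finite combination of push-forwards of $f_0$ under maps of linear growth, so all moments up to order $q$ are finite at every step by induction and the manipulations are justified a posteriori. A second minor point is that the constant should be stated to depend only on $q, C_\coll, M_\infty(\b)$ and not on $\Delta t$ or $n$ separately, only through $n\Delta t$; this is transparent from the iteration above since $\Delta t \in (0,1]$.
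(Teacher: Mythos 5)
Your proposal is correct and follows essentially the same route as the paper: bound $\int |v|^q Q^+(f_n,f_n)\,\d v$ via the linear growth condition \eqref{eq:lin} and $M_\infty(\b)$, deduce a one-step estimate of the form $(1+C\Delta t)$ for the moment along the convex combination \eqref{eq:euler}, iterate, and use $1+x\le e^x$. The only cosmetic difference is that you carry $M_q^q$ through the iteration and take $q$-th roots at the end (which is, if anything, slightly cleaner than the paper's manipulation of $M_q^{1/q}$ directly), and you explicitly address the admissibility of the unbounded test function $|v|^q$, which the paper leaves implicit.
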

\begin{proof}
From Assumption \ref{asm:coll}, in particular \eqref{eq:lin}, we have
\begin{equation*} 
\int |v'|^q \b(\d \theta)
 \leq \int C_\coll^q(1 + |\theta|)^q\left( |v| + |v_*|\right)^q \b(\d \theta)
 \leq C (1 + M^q_\infty(\b)) \left( |v|^q + |v_*|^q\right) 
\label{eq:collest}
\end{equation*}
for some positive constant $C_0 = C_0(C_\coll,q)$, where we recall $M_\infty(\b) = \sup \{ |\theta|\, : \,\theta\in \supp(\Theta)\}$. By definition of the collisional operator $Q$, and its positive component $Q^+$, by applying the above estimate we obtain
\begin{align*}
\int |v|^q Q^+(f_n,f_n)(\d v) =
\frac12\! \iiint \!\left( |v'|^q + |v_*'|^q  \right) f_n(\d v)f_n(\d v_*)\b(\d \theta)   \leq  C_1^q \int |v|^q f_n(\d v),
\end{align*}
for some $C_1 = C_1(q,C_\coll,M_\infty(\b))>0$. For the Euler update \eqref{eq:euler}, therefore,  it holds
\begin{align*}
M_q^{1/q}(f_{n+1})& \leq (1- \Delta t)M^{1/q}_q(f_n) + \Delta t M_q^{1/q}\left(Q^+(f_n,f_n) \right),\\
& \leq (1 + (C_1 - 1)\Delta t) M_q^{1/q}(f_n)\,.
\end{align*}
By iterating the estimate at all time steps, we obtain for $C = C_1 - 1$
\[
M_q^{1/q}(f_n) \leq (1 + \Delta t C)^n M_q^{1/q}(f_0)\,.
\]
and conclude by noting that $1 + \Delta t C \leq e^{C\Delta t}$. 
\end{proof}
\begin{remark}
The above estimate of the $q$-th moment is clearly not sharp, as it is intended to cover the wide class of models satisfying Assumption \ref{asm:coll}. For a given model, sharper estimate can typically be derived by exploiting the particular structure of the collisions, as done, for instance, in models for wealth dynamics^^>\cite{pareschi2006self}, or the Boltzmann interaction \ref{item:M4} where the second moment, the energy, is conserved by the collision.
\end{remark}

We end the section by defining a notion of solution to the Cauchy problem \eqref{eq:boltzmann}.

\begin{definition}[Weak measure solution to \eqref{eq:boltzmann}]  Let $T>0$ be a time horizon, $\b \in \mathcal{P}(\Theta)$, and $f_0\in \mathcal{P}(\Rd)$ an initial datum. We say $f\in \mathcal{C}([0,T],\mathcal{P}(\Rd))$ is a weak measure solution to the Cauchy problem \eqref{eq:boltzmann}  over the interval $[0,T]$, if $f(0) =f_0$ and
for any test function $\phi\in \mathcal{C}_0^\infty(\Rd)$ and almost every $t\in[0,T]$ it holds
\begin{equation*}
\frac{\d}{\d t}\int \phi(v) f(t,\d v) = \frac12 \iiint \b(\d \theta) \left(\phi(v') + \phi(v'_*) - \phi(v) - \phi(v_*) \right) f(t,\d v) f(t, \d v_*) \,.
\end{equation*}
\label{def:weaksol}
\end{definition}

\section{Error analysis of Nanbu Monte Carlo method}
\label{sec:nanbu}

\subsection{The Nanbu particle system} \label{sec:nanbu:particle}

We will consider a particle system $\BV = (V^1, \dots,V^N)$ to be a random variable taking values in $\R^{Nd}$. The Nanbu particles generated with Algorithm \ref{alg:nanbu} can be seen as a realization of a Markov process $(\BV_n)_{n\in \mathbb{N}}$ constructed in the following way. First, we independently sample $N$ independent particles
\[
V_{0}^i \sim f_{0}\,, \qquad i = 1, \dots N\,.
\]
and fix a time step $\Delta t \in(0,1]$. Recall $\b\in \mathcal{P}_\infty(\Theta)$ is the probability distribution of the collision parameter, where $\Theta$ is some given parameters space. At every step $n = 1,2, \dots, $ we consider for each particle $i=1,\dots,N$ three random variables $\tau^i_n, \alpha^i_n, \theta^i_n$ with
$
 \tau_n^i \sim \textup{Bern}(\Delta t)$,
$\alpha_n^i \sim \unif[0,N)$,
and
$\theta_n^i \sim \b$.
The role of $\tau^i_n$ is to determine if the $i$-th particle collides or not at step $n$. If the particle collides, the partner particle is determined by the variable $\alpha^i_n$ through 
$
\j(\alpha):= \lfloor \alpha \rfloor + 1\,,
$
where $\lfloor\cdot \rfloor$ is the floor map. The third variable $\theta^i_n$ indicates the parameter of the collision. 

In this way, we can write the particles update of the Nanbu method as
\begin{equation} \label{eq:syst:nanbu}
V_{n+1}^i = (1-\tau_n^i)V_n^i + \tau_n^i \coll\left(V_n^i, V_n^{\j(\alpha_n^i)}, \theta_n^i \right) \qquad i = 1,\dots,N\,.
\end{equation}
As described by Algorithm \ref{alg:nanbu}, with probability $1-\Delta t$ particle $i$ does not collide, while with probability $\Delta t$ it does. If it does, particle $i$ collides with particle $\j(\alpha_n^i)$, which is uniformly chosen among the $N$ particles. We note that it may happen that particle $i$ collides with itself. 

\subsection{The nonlinear particle system}

To prove convergence of the Nanbu particle system $\BV_n$ towards the Euler approximation $f_n \in \mathcal{P}(\Rd)$ of the kinetic equation \eqref{eq:boltzmann} we will consider an intermediate approximation of $f_n$ made of a nonlinear particle system $(\BW_n)_{n\in \mathbb{N}}$,  $\BW_n = (W_n^1, \dots,W_n^N)$,  of i.i.d. particles such that 
\[
W_{n}^i \sim f_{n}\,, \qquad i = 1, \dots N\,, \qquad \textup{for all}\quad n \geq 0\,.
\]
Let $\overline{f}^N_n = (1/N)\sum_i \delta_{W_n^i}$ be the corresponding empirical measure, our objective to is estimate from above the Nanbu approximation error (in terms of Wasserstein-1 distance) via triangular inequality:
\[
\W_1\left(f_n,f^N_n\right) \leq \W_1\left(f_n,\overline{f}_n^N\right) + \W_1\left(\overline{f}_n^N, f^N_n\right)\,.
\]
We note that the first term of right-hand side can be bounded via Theorem \ref{t:emp} since $W^i_n$ are i.i.d. and $f_n$-distributed. In the following, we show to how to build the auxiliary system $(\BW_n)_{n\in \mathbb{N}}$ such that the second term can be controlled.

At time step $n = 0$, $\BW_0$ is a copy of $\BV_0$:
$W_{0}^i = V^i_{0}$, $i = 1,\dots,N\,.$
For each $n=0,1,\dots$ we consider a $f_n$-distributed $\alpha$-random variable $W^*_{n}$, that is, 
\[
W^*_{n}(\alpha) \sim f_n \quad \textup{if} \quad \alpha \sim \unif[0,N)\,.
\]
Intuitively, the role of this auxiliary variable is to provide a colliding partner to each of the particles $W^i_{n}$, $i = 1,\dots,N$.
An explicit choice of $W^*_{n}$ will be given later in Lemma \ref{l:coupling}, and this is not relevant at the moment. We correlate the two particles systems by using, in the update of $W^i_n$, the same random variables $\tau^i_n$, $\alpha_n^i$, and $\theta^i_n$ used for the update of $V^i_n$:
\begin{equation} \label{eq:syst:non}
W^i_{n+1} = (1-\tau_n^i)W_n^i + \tau_n^i \coll\left(W_n^i, W_{n}^{*}(\alpha_n^i), \theta_n^i\right)  \qquad i = 1,\dots,N\,.
\end{equation}

Since the random variables used in the above update are all independent with each other, $W^i_n$, $i = 1,\dots,N$ are also independent. We now check that if $W^i_n \sim f_n$ then $W^i_{n+1}\sim f_{n+1}$. Recall from the definition \eqref{eq:operator} of $Q$ that the gain part of the collisional operator is defined by
\begin{equation}
\int \phi(v) Q^+(g,g)(\d v)= \frac12\iiint \left( \phi(v') + \phi(v'_*) \right) \b(\d \theta)\, g(\d v)\,g(\d v_*)
\end{equation}
for all $\phi\in \mathcal{C}_b(\Rd)$ and $g\in \mathcal{P}(\Rd)$. By exploiting the symmetry of the collisional dynamics $v' = \coll(v,v_*,\theta)$ and $v'_* = \coll(v_*,v,\theta_*)$, where both $\theta,\theta_*$ are $\b$-distributed, we note that the gain operator can be defined compactly as $Q(g,g): = \coll_\#(g\otimes g \otimes \b)$.

Assuming $W^i_n \sim f_n$, and since $\mathbb{E}[\tau^i_n] = \Delta t$, from the update \eqref{eq:syst:non} we have
\begin{align*}
\mathbb{E} \phi(W^i_{n+1}) & = (1-\Delta t)\mathbb{E}\phi(W_n^i) + 
\Delta t \mathbb{E} \coll(W_n^i, W_{n}^*(\alpha_n^i), \theta_n^i) \\
& = (1-\Delta t) \int \phi(v) f_n(\d v) + 
\Delta t \iiint  \phi\left(\coll(v,v_*,\theta)\right) b (\d \theta)f_n(\d v) f_n(\d v_*) \\
& = (1-\Delta t) \int \phi(v) f_n(\d v) + 
\Delta t \int \phi(v) Q^+(f_n,f_n)(\d v) \,.
\end{align*}
The last expression corresponds exactly to the forward Euler update \eqref{eq:euler} tested against $\phi$, and therefore we can conclude that $W^i_{n+1}$ is $f_{n+1}$-distributed.

We remark that the particle system $\BW = (W^1,\dots, W^N)$ is nonlinear as the particles collide at every $n\geq 1$ with the $\alpha$-random variable $W^*_{n}$, whose definition depends on the law $f_n$. This is also the reason why $\BW$ cannot be numerically simulated, unlike the Nanbu particles system $\BV$. We refer to^^>\cite[Appendix A.4]{diez2022review1} for a remainder on non-linear Markov processes.

\subsection{Coupling and proof of Theorem \ref{t:nanbu}}

We have seen already that the Nanbu particle system \eqref{eq:syst:nanbu} and the nonlinear particle system \eqref{eq:syst:non} are coupled by the initial conditions $V^i_0 = W_0^i$, and by the random variables $\tau_n^i, \alpha^i_n, \theta^i_n$, $i = 1,\dots,N$. We now provide a way of constructing the auxiliary $\alpha$-random variable $W^*_{n}(\cdot)$ in a way such that the particle system $V^{\j{(\cdot)}}$ of colliding particles (seen as an $\alpha$-random variable) is optimally coupled with the colliding particle $W^*_{n}(\cdot)$ of the nonlinear system. Optimality is intended with respect to the Wasserstein-1 distance. 

\begin{lemma}[Coupling for Nanbu method.] \label{l:coupling} Consider $f\in\mathcal{P}_1(\Rd)$, and $\v = (v^i, \dots, v^N)\in (\Rd)^N$ with $\mu_\v = (1/N)\sum_i \delta_{v^i}$. There exists a measurable mapping 
\begin{align*}
W_f^*:(\Rd)^N \times [0,N) &\to \Rd\\
(\v,\alpha) &\mapsto W^*(\v,\alpha)
\end{align*}
with the following property: if $\alpha$ is uniformly chosen from $[0,N)$, then the pair $(W_f^*(\v,\alpha),v^{\j(\alpha)})$ is an optimal coupling between $f$ and $\mu_{\v}$.
\end{lemma}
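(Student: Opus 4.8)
The plan is to realize $W_f^*$ as a parametrized Skorokhod representation of an optimal transport plan, exploiting the fact that a variable $\alpha\sim\unif[0,N)$ secretly carries two independent pieces of randomness: the index $\j(\alpha)=\lfloor\alpha\rfloor+1$, which is uniform on $\{1,\dots,N\}$, and the fractional part $\{\alpha\}:=\alpha-\lfloor\alpha\rfloor$, which is $\unif[0,1)$ and independent of $\j(\alpha)$. Since $v^{\j(\alpha)}\sim\mu_{\v}$ automatically, the task reduces to building a Borel map $W_f^*(\v,\alpha)$, using only $\{\alpha\}$ as its source of randomness, so that the pair $(W_f^*(\v,\alpha),v^{\j(\alpha)})$ has the law of a fixed optimal plan $\gamma_{\v}\in\Gamma_o(f,\mu_{\v})$.

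First I would select $\gamma_{\v}$ measurably in $\v$. For each $\v$ the set $\Gamma_o(f,\mu_{\v})$ is nonempty and weakly compact (existence of optimal plans, see \cite{villani2009}); the map $\v\mapsto\mu_{\v}$ is continuous, $\v\rightrightarrows\Gamma(f,\mu_{\v})$ is a continuous compact‑valued correspondence, and $(\v,\gamma)\mapsto\int|v-w|\,\gamma(\d v,\d w)$ is lower semicontinuous, so the argmin correspondence $\v\rightrightarrows\Gamma_o(f,\mu_{\v})$ is measurable with closed values and admits a Borel selector $\v\mapsto\gamma_{\v}$ by the Kuratowski--Ryll‑Nardzewski measurable selection theorem. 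Because the second marginal of $\gamma_{\v}$ is $\mu_{\v}=\frac1N\sum_i\delta_{v^i}$, disintegrating along the second coordinate and splitting the mass equally among coinciding points yields a decomposition
\[
\gamma_{\v}=\frac1N\sum_{i=1}^N\gamma_{\v}^i\otimes\delta_{v^i},\qquad \gamma_{\v}^i\in\mathcal{P}(\Rd),\qquad \frac1N\sum_{i=1}^N\gamma_{\v}^i=f,
\]
with $(\v,i)\mapsto\gamma_{\v}^i$ Borel (the level‑set pattern of $i\mapsto v^i$ is a Borel function of $\v$, and on each pattern the disintegration is the canonical one up to a fixed rescaling). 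Next, invoke the parametrized form of the elementary fact that every Borel probability measure on $\Rd$ is the push‑forward of $\mathrm{Leb}|_{[0,1)}$ under a Borel map: there is a jointly Borel map $(\v,i,u)\mapsto G^i_{\v}(u)\in\Rd$ with $(G^i_{\v})_\#\big(\mathrm{Leb}|_{[0,1)}\big)=\gamma^i_{\v}$ for all $\v,i$ (e.g.\ via iterated conditional quantile transforms, which depend measurably on the measure). Finally set
\[
W_f^*(\v,\alpha):=G^{\j(\alpha)}_{\v}\big(\alpha-\lfloor\alpha\rfloor\big),
\]
which is Borel in $(\v,\alpha)$.

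To conclude, if $\alpha\sim\unif[0,N)$ then conditionally on $\j(\alpha)=i$ one has $\alpha-\lfloor\alpha\rfloor\sim\unif[0,1)$, hence $W_f^*(\v,\alpha)\sim\gamma^i_{\v}$ while $v^{\j(\alpha)}=v^i$; averaging over the uniform index $i$ shows $(W_f^*(\v,\alpha),v^{\j(\alpha)})$ has law $\gamma_{\v}\in\Gamma_o(f,\mu_{\v})$. In particular its marginals are $f$ and $\mu_{\v}$ and the coupling is optimal, which is exactly the asserted property.

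The only non‑routine ingredients are the two measurability steps — the Borel selection of the optimal plan $\gamma_{\v}$ and the parametrized Skorokhod representation $(\v,i,u)\mapsto G^i_{\v}(u)$ — and I would expect the cleanest write‑up to isolate these as standalone lemmas; the splitting of $\alpha$ into its independent index and fractional parts and the marginal computations are then pure bookkeeping.
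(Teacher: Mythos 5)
Your proposal is correct and follows essentially the same route as the paper: a measurable selection of an optimal plan $\gamma_{\v}\in\Gamma_o(f,\mu_{\v})$, disintegration along the second (atomic) marginal into conditional laws $\gamma_{\v}^i$, a randomization/parametrized representation of each $\gamma_{\v}^i$ from the fractional part of $\alpha$, and the identical final formula $W_f^*(\v,\alpha)=g^{\j(\alpha)}(\v,\alpha-\lfloor\alpha\rfloor)$ — the paper phrases the disintegration as an explicit kernel $G^i(\v,A)=\pi_{\mu_\v}(A\times\{v^i\})/\pi_{\mu_\v}(\Rd\times\{v^i\})$ and invokes Kallenberg's randomization lemma where you invoke a parametrized Skorokhod representation, which are the same device. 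Your remark about splitting mass among coinciding atoms $v^i$ is a point the paper glosses over, so it is a welcome (if minor) refinement rather than a divergence.
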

\begin{proof} The proof is strategy follows the technique introduced in^^>\cite[Lemma 3]{cortez2016}. 
Let $\pi_{\v} \in\mathcal{P}(\Rd \times \Rd)$ be the optimal transference plan between $f$ and $\mu_\v$. Thanks to a measurable selection result, see, for instance,^^>\cite[Corollary 5.22]{villani2009}, there exists a measurable mapping
\[
\v \mapsto \pi_{\mu_\v} \quad \textup{s.t.} \quad \pi_{\mu_\v} \in \Gamma_o(f,\mu_\v)
\]
where, we recall that $\Gamma_o(f,\mu_\v)$ is the set of optimal mappings from $f$ to $\mu_\v$.
Define for any Borel set $A\subseteq \Rd$
\[G^i(\v,A) := \frac{\pi_{\mu_\v}(A \times \{v^i\})}{\pi_{\mu_\v}(\Rd \times \{v^i\})}\,.\]
We note that $G^i$ is a probability kernel from $(\Rd)^N$ into $\Rd$, thanks to the measurability of $\v \mapsto \pi_{\mu_\v}$, and so there exists $g^i = g^i(\v,\beta)$  such that $\law(g^i(\v,\cdot)) = \law(G^i(\v,\cdot))$ if $\beta\in \unif[0,1)$, see^^>\cite[Lemma 4.22]{kallenberg2021}. This procedure is called \textit{randomization} of $G^i(\v,\cdot)$.
Let us define the mapping $W_f^*$ as
\[
W_f^*(\v,\alpha) := \sum_{i=1}^N \bm{1}_{\{\j(\alpha) = i\}} g^i(\v, \alpha - \lfloor \alpha \rfloor)\,.
\]
To conclude, we need to show that $(W_f^*(\v,\cdot),v^{\j(\cdot)})$
have a joint distribution $\pi_{\mu_\v}$ for $\alpha \sim \unif[0,N)$. Take a Borel set $B \subseteq \Rd$ and $j\in\{1,\dots,N\}$, we have indeed
\begin{align*}
\mathbb{P} \left( W_f^*(\v,\alpha) \in A,\, v^{\j(\alpha)} = v^j \right) &= 
\mathbb{P} \left ( W_f^*(\v,\alpha)  \in A\, \middle |\, v^{\j(\alpha)} = v^j\right)\mathbb{P}\left( v^{\j(\alpha)} = v^j \right) \\
& = \mathbb{P} \left ( g^j(\v,\alpha) \in A \right) \frac1N  = \frac{\pi_{\mu_\v}(A \times \{v^j\})}{\pi_{\mu_\v}(\Rd \times \{v^j\})} \frac1N \\
& =  \frac{\pi_{\mu_\v}(A \times \{v^j\})}{1/N} \frac1N   = \pi_{\mu_\v}(A \times \{v^j\})\,.
\end{align*}
\end{proof}

\begin{proof}[Proof of Theorem \ref{t:nanbu}] To study the distance between the Nanbu particle system $\BV_n$ and the nonlinear system $\BW_n$, or, more precisely, between their respective empirical distributions $f_n^N$ and $\overline{f}_n^N$, we couple particle $V_n^i$ with $W_n^i$ for all $i = 1,\dots,N$ and $n \geq 0$. This coupling, in principle, is sub-optimal with respect to the Wasserstein-1 distance
and so it holds
$
\W_1(f^N_n, \overline{f}^N_n) \leq (1/M)\sum_{i=1}^N |V^i_n - W^i_n|\,.
$
To generate the nonlinear particle system $\BW$ we use the map $W_{f_n}^*(\cdot, \alpha)$  constructed in Lemma \ref{l:coupling} by choosing  $W_n^*(\cdot):= W_{f_n}^*(\BV_n,\cdot)$ as auxiliary colliding particle. This is a possible choice as the $ W_{f_n}^*(\BV_n,\cdot)$ is $f_n$-distributed as $\alpha$-random variable.

The two particle systems are, therefore given by $V_0^i = W_0^i$ and
\begin{equation}
\begin{cases}
V_{n+1}^i = (1-\tau_n^i)V_n^i + \tau_n^i \coll \left (V_n^i, V_n^{\j(\alpha_n^i)} ,\theta^i_n\right) & i = 1,\dots,N\, \\
W^i_{n+1} = (1-\tau_n^i)W_n^i + \tau_n^i \coll\left(W_n^i, W_{f_n}^{*}(\BV_{n},\alpha_n^i),\theta^i_n \right)  & i = 1,\dots,N\,.
\end{cases}
\end{equation} 

By using the Lipschitz continuity assumption on $\coll$, it follows
\begin{small}
\begin{align*}
| V_{n+1}^i &- W_{n+1}^i|  = \Big |  (1-\tau_n^i)V_n^i + \tau_n^i \coll\Big(V_n^i, V_n^{\j(\alpha_n^i)}, \theta^i_n \Big) \\
& \quad -   
(1-\tau_n^i)W_n^i - \tau_n^i \coll\Big(W_n^i, W_n^{*}(\BV_{n},\alpha_n^i),\theta^i_n \Big) 
     \Big | \\
    & \leq (1-\tau_n^i) | V_n^i - W_n^i | + \tau_n^i \left| 
     \coll\Big(V_n^i, V_n^{\j(\alpha_n^i)},\theta^i_n\Big)
     \!- \!\coll\Big(W_n^i, W_n^{*}(\BV_{n},\alpha_n^i),\theta^i_n \Big) 
          \right | \\
          &  \leq (1-\tau_n^i) | V_n^i - W_n^i |+ \tau_n^i  L_\coll(1 + |\theta^i_n|) \big (\left | V_n^i - W_n^i\right | + \big |V_n^{\j(\alpha_n^i)} -  W_{f_n}^{*}(\BV_{n},\alpha_n^i)\big | \big).
\end{align*}\end{small}
We take the expectation and obtain, thanks to the optimal choice of $W_n^*$, 
\begin{equation}
\mathbb{E} | V_{n+1}^i - W_{n+1}^i|  \leq (1 + \Delta tC_1 )\mathbb{E} | V_{n}^i - W_{n}^i| + \Delta t C_2 \W_1\left(f^N_n, f_n \right)
\label{eq:est1}
\end{equation}
for some constant $C_1, C_2>0$ which depend only on $q,L_\coll, M_\infty(\b)$.
We note that the second term can be bounded as 
\begin{align}
\mathbb{E} \W_1(f^N_n, f_n ) &\leq \mathbb{E} \W_1(f^N_n, \overline{f}^N_n ) + 
\mathbb{E} \W_1(\overline{f}_n^N, f_n ) \notag\\
&  \leq  \frac1{N}\sum_{i=1}^N \mathbb{E}| V_{n}^i - W_{n}^i| + 
\mathbb{E}\W_1(\overline{f}^N_n, f_n )\,. \label{eq:est2}
\end{align}
By summing \eqref{eq:est1} for all $i = 1,\dots,N$, and dividing by $N$, we obtain
\begin{equation*}
\frac1N\sum_{i=1}^N \mathbb{E}|V^i_{n+1} - W^i_{n+1} | \leq (1 + \Delta t(C_1 + C_2))\frac1N\sum_{i=1}^N \mathbb{E} |V^i_n - W_n^i| +\Delta t C_2C \mathbb{E}\W_1\left(\overline{f}^N_n, f_n \right).
\end{equation*}
Iterating the argument for the time step $0\leq h\leq n$ leads, for some $C_3>0$,
\begin{multline}
\frac1N \sum_{i=1}^N \mathbb{E}|V^i_n - W^i_n|  \leq (1 + \Delta t C_3)^n \frac1N\sum_{i=1}^N \mathbb{E} |V^i_0 - W_0^i| 
\\+\Delta t C_3 \sum_{h=0}^{n-1}(1-\Delta t C_3)^h \mathbb{E}\W_1\left(\overline{f}^N_h, f_h \right)\,.
\label{eq:estA}
\end{multline}
We note that the first term on the right-hand side is zero, thanks to the choice of initial data $V_0^i = W_0^i$, $i = 1,\dots,N$. For the second term, we note that $\overline{f}^N_h$ is the empirical measure associated with the $f_h$-distributed i.i.d. nonlinear particle system $W^i_h, i = 1,\dots,N$. Therefore, we apply Theorem \ref{t:emp}, and Lemma \ref{l:moment} to get 
\begin{equation*}
 \mathbb{E}\W_1\left(\overline{f}^N_h, f_h \right) \leq C M_1^{1/q}(f_h) \ve (N)\leq C_4 e^{Ch\Delta t} M_1^{1/q}(f_0) \ve(N)\,.
\end{equation*}
By plugging this estimate in \eqref{eq:estA} and by using $1+x \leq e^x$, we obtain
\begin{align*}
\frac1N \sum_{i=1}^N \mathbb{E}|V^i_n - &W^i_n|  \leq (1 + \Delta t C_3)^n \frac1N\sum_{i=1}^N \mathbb{E} |V^i_0 - W_0^i| \\
& \qquad+\Delta t C_3 C_4 \sum_{h=0}^{n-1}(1-\Delta t C_3)^h  e^{Ch\Delta t} M_1^{1/q}(f_0) \ve(N) \\
& \leq
e^{C_3 n\Delta t} \frac1N\sum_{i=1}^N \mathbb{E} |V^i_0 - W_0^i| +  C_3 C_4 (\Delta t n) e^{\max\{C_3,C\} n\Delta t}M_1^{1/q}(f_0) \ve(N)\,.
\end{align*}
Since $V^i_0 = W^i_0$, for some $C_5,C_6>0$, it holds
\[
\frac1N \sum_{i=1}^N \mathbb{E}|V^i_n - W^i_n| \leq C_5 T e^{C_6T} M_q^{1/q}(f_0) \ve(N)\,.
\]
To conclude, we use \eqref{eq:est2} again and obtain
\begin{align*}
\mathbb{E} \W_1\left(f^N_n, f_n \right) & \leq  \frac1{N}\sum_{i=1}^N \mathbb{E}| V_{n}^i - W_{n}^i| + 
\mathbb{E}\W_1\left(\overline{f}^N_n, f_n \right) \\
&\leq C_5 T e^{C_6T} M_q^{1/q}(f_0)\ve(N) + C e^{CT}M_q^{1/q}(f_0)\ve(N) \\
& \leq C_7( 1 + T)e^{C_6T} M_q^{1/q}(f_0)\ve(N)\,.
\end{align*}
Constants $C_6,C_7$ depend on $q,L_\coll, C_\coll, M_\infty(\b)$, but are independent on $d,N$.
\end{proof}

\subsection{Extension to Time Relaxed Monte Carlo methods }
\label{sec:trmc}

Recall the first order TRMC method is based on the time discrete scheme
\[f_{n+1} = (1 - \tau) f_n  + \tau(1 - \tau) Q(f_n,f_n) + \tau^2 f_\infty\]
for $\tau \in (0,1)$.
As in Section \ref{sec:nanbu}, we write the particles evolution by auxiliary random variables. For every particle $i$, we consider $\tau_n^{1,i}\sim \bern(\tau)$ and $\tau_n^{2,i}\sim \bern(\tau)$ independent of each other. We also sample $\alpha^i_n\sim \unif[0,N)$, $\theta^i_{n} \sim \b$, and 
\[
M^i_n \sim f_{\infty}
\]
independent for all $i = 1,\dots,N$. The particle system generated by Algorithm \ref{alg:trmc} can then be iteratively defined as 
\begin{equation} \label{eq:syst:trmc}
V_{n+1}^i = (1 - \tau^{i,1}_n)V_n^i + \tau^{i,1}_n(1 -  \tau_n^{i,2})  \coll\left(V_n^i, V_n^{\j(\alpha_n^i)}, \theta_n^i \right) +  \tau^{i,1}_n \tau_n^{i,2} M^i_n\,.
\end{equation}
To define the correspondent non-linear system $\BW_n$, we employ the auxiliary $\alpha$-random variable $W^*_{f_n}(\BV_n, \cdot)$ given by Lemma \ref{l:coupling}. Then, starting from $\BW_0 = \BV_0$, we define
\begin{equation} \label{eq:syst:trmcnon}
W_{n+1}^i = (1 - \tau^{i,1}_n)W_n^i + \tau^{i,1}_n(1 -  \tau_n^{i,2})  \coll\left(W_n^i, W^*_{f_n}(\BV_n, \alpha_n^i), \theta_n^i \right) +  \tau^{i,1}_n \tau_n^{i,2} M^i_n\,.
\end{equation}

\begin{proof}[Proof of Theorem \ref{t:trmc}]
The proof relies on coupling the two particle systems $\BV_n$ and $\BW_n$, and follows the same steps as the proof of Theorem \ref{t:nanbu}. We omit the details for brevity. We only note that the relaxation towards the asymptotic distribution $f_\infty$ does not introduce an additional error, since the systems share the same particles $M^i_n$, $i  = 1,\dots,N$.
In particular, the equivalent of estimate \eqref{eq:est1} is given in this case by 
\begin{align*}
    \mathbb{E} | V_{n+1}^i - W_{n+1}^i| &\leq  (1 + \tau C_1 )\mathbb{E} | V_{n}^i - W_{n}^i| + \tau(1 - \tau) C_2 \W_1\left(f^N_n, f_n \right)\,.
\end{align*}
    
\end{proof}

\section{Error analysis of forward Euler scheme}
\label{sec:euler}

In this section, we provide a proof of the existence and uniqueness of weak measure solutions (Definition \ref{def:weaksol}) to the Cauchy problem \eqref{eq:boltzmann}. We follow standard arguments of a constructive proof, see e.g.^^>\cite{piccoli2019signed}, that is, we first show that the forward Euler approximation forms a Cauchy sequence (Lemma \ref{l:cauchy}), and then that its limit is indeed a solution to the Boltzmann-like equation (Lemma \ref{l:existance}). Finally, we provide a quantitative estimate on the approximation error of the forward Euler scheme.

Recall that the iterative forward Euler scheme is defined as
\[
f_{n+1} = (1 - \Delta t) f_n + \Delta t Q^+(f_n,f_n)\,,
\]
with initial data $f_0\in\mathcal{P}_1(\Rd)$. For a time horizon $T>0$, we consider the interpolation $\fdt \in \mathcal{C}([0,T], \mathcal{P}_1(\Rd))$ given by 
\begin{equation} \label{eq:eulert}
\fdt (t) = (1- (t -  n \Delta t )) f_n  + (t - n \Delta t) Q^+(f_n,f_n) \qquad \textup{for} \;\; t\in [n\Delta t, (n+1)\Delta t )\,.
\end{equation}

\begin{lemma} \label{l:cauchy} Under Assumption \ref{asm:coll} and $\b\in\mathcal{P}_\infty(\Theta)$, let $\Delta t_k = T/2^k$ and $f^{\Delta t_k}$ be defined by \eqref{eq:eulert}. Then $\{f^{\Delta t_k}([0,T])\}_{k\in \mathbb{N}}$ is a Cauchy sequence in $\mathcal{C}([0,T],\mathcal{P}_1(\Rd))$.
\end{lemma}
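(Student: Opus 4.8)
The plan is to show that the sequence $(f^{\Delta t_k})_{k}$ is Cauchy by comparing the interpolants at two successive dyadic step sizes, $\Delta t_k = T/2^k$ and $\Delta t_{k+1} = T/2^{k+1}$, and summing the resulting telescoping bound. Since $\mathcal{C}([0,T],\mathcal{P}_1(\Rd))$ with the sup-in-time $\W_1$ metric is complete, it suffices to produce a bound of the form $\sup_{t\in[0,T]}\W_1(f^{\Delta t_k}(t), f^{\Delta t_{k+1}}(t)) \leq C\,\Delta t_k$, or even just a summable bound in $k$. The natural way to get this is a \emph{one-step stability estimate}: if $g,h\in\mathcal{P}_1(\Rd)$ and we apply one Euler step of size $s$ to $g$ and (two) Euler steps of size $s/2$ to $h$, then the discrepancy grows by at most a factor $(1+Cs)$ plus a local consistency error of size $\mathcal{O}(s^2)$.

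The key steps, in order, are as follows. \textbf{(1)} First establish a \emph{stability estimate for the Euler map}: for $g,h\in\mathcal{P}_q(\Rd)$, $\W_1\big((1-s)g + sQ^+(g,g),\,(1-s)h + sQ^+(h,h)\big) \leq (1+Cs)\W_1(g,h)$, where $C$ depends on $L_\coll, M_\infty(\b)$. This follows by taking an optimal coupling of $g$ and $h$, using it together with independent copies of the partner variables to couple $Q^+(g,g)$ and $Q^+(h,h)$, and invoking the Lipschitz bound \eqref{eq:lip} on $\coll$ — essentially the same coupling computation as in the proof of Theorem \ref{t:nanbu} but at the level of measures. \textbf{(2)} Next, a \emph{local consistency estimate}: applying two half-steps of size $s/2$ versus one full step of size $s$ to the same $f_n$ produces a difference of size at most $C s^2 M_1(f_n)$ (indeed $M_1$ or $M_q$), because $f_{n+1/2} = (1-s/2)f_n + (s/2)Q^+(f_n,f_n)$ and then $(1-s/2)f_{n+1/2} + (s/2)Q^+(f_{n+1/2},f_{n+1/2})$ differ from $(1-s)f_n + sQ^+(f_n,f_n)$ only by $\mathcal{O}(s^2)$ terms, each of which is a difference of probability measures with uniformly (in $n$, for $k$ fixed over $[0,T]$) bounded first moments by Lemma \ref{l:moment}. \textbf{(3)} Combine (1) and (2) via a discrete Grönwall argument over the at most $2^k$ steps: at the grid points of the coarser mesh, $\W_1(f^{\Delta t_k}(t_n), f^{\Delta t_{k+1}}(t_n)) \leq \sum_{j\leq n}(1+Cs)^{n-j}\cdot Cs^2 M \leq C' T e^{CT} M\, \Delta t_k$. \textbf{(4)} Finally, control the interpolation error \emph{between} grid points: since $f^{\Delta t}(t)$ is a convex combination of $f_n$ and $Q^+(f_n,f_n)$, on an interval of length $\Delta t$ it moves by at most $\mathcal{O}(\Delta t)$ in $\W_1$ (again using bounded moments), so the sup over $t\in[0,T]$ is controlled by the sup over grid points plus $\mathcal{O}(\Delta t_k)$. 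Summing over $k$ gives a geometric series $\sum_k C'' 2^{-k} < \infty$, proving the Cauchy property. A small technical point: comparing $\Delta t_k$ and $\Delta t_{k+1}$ is convenient because the coarse grid is a subset of the fine grid; for general $\Delta t \to 0$ one would instead compare each $f^{\Delta t}$ to the (not-yet-constructed) limit, so restricting to dyadic steps as in the statement is the clean route.

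\textbf{Main obstacle.} The delicate part is step (1), the $\W_1$-stability of the Euler map with a constant that does \emph{not} blow up — one must set up the coupling of $Q^+(g,g)$ and $Q^+(h,h)$ carefully. Using the representation $Q^+(g,g) = \coll_\#(g\otimes g\otimes \b)$ (symmetrized), one couples by pushing forward the product of an optimal plan $\pi\in\Gamma_o(g,h)$ with itself and with the diagonal coupling of $\b$ with itself: $\coll$ applied to $(v,v_*,\theta)$ versus $(w,w_*,\theta)$ with $(v,w),(v_*,w_*)\sim\pi$. Then \eqref{eq:lip} gives $|\coll(v,v_*,\theta)-\coll(w,w_*,\theta)| \leq L_\coll(1+|\theta|)(|v-w|+|v_*-w_*|)$, and integrating yields the factor $2L_\coll(1+M_\infty(\b))$ on $\W_1(g,h)$, combined with the weight $s$ from the convex combination — no moment growth enters the stability constant, only the consistency constant in step (2), which is harmless since moments are uniformly bounded on $[0,T]$ by Lemma \ref{l:moment}. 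The bookkeeping of the three $\mathcal{O}(s^2)$ terms in step (2) (the cross term $Q^+(f_n,\cdot)$ is bilinear, so $Q^+(g,g)-Q^+(h,h)$ must be split, but here $g,h$ are both close to $f_n$ so this is routine) is the only other place requiring care.
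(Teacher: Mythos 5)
Your proposal is correct and follows essentially the same route as the paper: a one-step $\W_1$-stability estimate for the Euler map derived from the Lipschitz property of $Q^+$ (which the paper records as its inequality \eqref{eq:Qlip}), a local $\mathcal{O}(\Delta t^2)$ consistency error between one coarse step and two half-steps controlled via the moment bound of Lemma \ref{l:moment}, a discrete Gr\"onwall iteration over the nested dyadic grids, and summation of the resulting geometric series. The coupling construction you sketch for the stability of $Q^+$ is exactly how the paper's estimate \eqref{eq:Qlip} is obtained, so there is no substantive difference in approach.
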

\begin{proof}
Let $k$ be fixed, and $\Delta t = \Delta t_k$ be the corresponding time step. We compare the error of the two iterates $\fdt$ and $f^{\Delta t/2}$. For a given time $t_n = n\Delta t$, we estimate the Wasserstein-1 distance between the two iterates at $t \in [t_n, t_n + \Delta t ]$. Recall we have for $s\in [0,\Delta t/2]$
\begin{equation*}
\begin{split}
\fdt(t_n + s) &= (1 - s)\fdt(t_n)  + sQ^+(\fdt(t_n), \fdt(t_n)) \\
f^{\Delta t/2}(t_n + s) &= (1 - s)f^{\Delta t/2}(t_n)  + sQ^+(f^{\Delta t/2}(t_n), f^{\Delta t/2}(t_n)) \,.
\end{split}
\end{equation*}
As can be inferred from the proof of Theorem \ref{t:nanbu}, under Assumption \ref{asm:coll} and for $\b\in\mathcal{P}_\infty(\Theta)$, 
the gain part of the collisional operator is Lipschitz. In particular, it holds for any $f_1,f_2,g_1,g_2\in\mathcal{P}_1(\Rd)$
\begin{equation} \label{eq:Qlip}
\W_1(Q^+(f_1,f_2), Q^+(g_1,g_2)) \leq L_\coll(1 + M_\infty(\b)) \left( \W_1(f_1,g_1) + \W_1(f_2,g_2) \right) \,.
\end{equation}
Therefore, for some $C_1>0$ we have
\begin{equation} \label{eq:A1}
\W_1(\fdt(t_n + s), f^{\Delta t/2}(t_n + s) ) \leq (1 + C_1s) \W_1(\fdt(t_n), f^{\Delta t/2}(t_n ) ) \,.
\end{equation}
Next, we consider $t \in [t_n + \Delta t/2, t_n +\Delta t]$. For $s\in[\Delta t/2, \Delta t]$ we have
\begin{equation*}
\begin{split}
\fdt(t_n + \Delta t/2 + s) &= (1 - s)\fdt(t_n + \Delta t/2)  + sQ^+(\fdt(t_n), \fdt(t_n)) \\
f^{\Delta t/2}(t_n + \Delta t/2 + s) &= (1 - s)f^{\Delta t/2}(t_n + \Delta t/2)  \\
& \qquad + sQ^+(f^{\Delta t/2}(t_n + \Delta t/2), f^{\Delta t/2}(t_n+ \Delta t/2)) 
\end{split}
\end{equation*}
from which follows 
\begin{align*}
\W_1(\fdt(&t_n + \Delta t/2 + s), f^{\Delta t/2}(t_n + \Delta t/2 + s)) \\
& \leq (1 -s) \W_1(\fdt(t_n + \Delta t/2), f^{\Delta t/2}(t_n + \Delta t/2))  \\
&\quad +  s \W_1 \left ( Q^+(\fdt(t_n),\fdt(t_n)), Q^+(f^{\Delta t/2}(t_n + \Delta t/2), f^{\Delta t/2}(t_n+ \Delta t/2))  \right) \\
&  =: (1 - s) A_1 + s A_2\,.
\end{align*}
The term $A_1$ is bounded by \eqref{eq:A1}, while for the term $A_2$, as before, there exists a positive constant $C_2$ such that 
\begin{small}
\begin{align*}
A_2 & \leq C_2 \W_1(\fdt(t_n), f^{\Delta t/2}(t_n+ \Delta t/2))  \\ 
& \leq C_1(1 - \Delta t/2 ) \W_1\left(\fdt(t_n), f^{\Delta t/2}(t_n)\right ) + \Delta t/2 \W_1\big(\fdt(t_n), Q^+(f^{\Delta t/2}(t_n ), f^{\Delta t/2}(t_n))\big ).
\end{align*}\end{small}
By using the linear growth assumption on the collisional map (Assumption \ref{asm:coll}) and Lemma \ref{l:moment}, we have for some $C_3,C_4>0$
\begin{align*}
\W_1(\fdt(t_n), Q^+(f^{\Delta t/2}(t_n ), f^{\Delta t/2}(t_n)) ) &\leq M_1(\fdt(t_n)) \!+\! (1 \!+ \!M_\infty(\b))2 M_1(f^{\Delta t/2}(t_n ))\\
&  \leq C_3 e^{CT} M_1(f_0) \leq C_4\,.
\end{align*}
By collecting the estimates for $A_1,A_2$, we obtain 
\begin{align*}
\W_1(\fdt(t_n + \Delta t/2 + s),& f^{\Delta t/2}(t_n + \Delta t/2 + s))\\
 &\leq (1 - s)(1 + C_1\Delta t/2)\W_1(\fdt(t_n), f^{\Delta t/2}(t_n)) \\
& \qquad  + s C_1 (1 - \Delta t/2 ) \W_1(\fdt(t_n), f^{\Delta t /2}(t_n))   +s \Delta t/2 C_4 \\
& \leq (1 +C_1\Delta t/2)^2 \W_1(\fdt(t_n), f^{\Delta t /2}(t_n)) +  (\Delta t/2)^2 C_4
\end{align*}
and so
$ \sup_{t\in [t_n, t_{n+1}]} \W_1(\fdt(t), f^{\Delta t/2}(t)) \leq e^{C_1 \Delta t} + (\Delta t/2)^2 C_4 \,.
$
After recalling that $\Delta t = \Delta t_k = T/2^{k}$ and $f^{\Delta t_{k}}(0) = f^{\Delta t_{k+1}}(0)$, we iterate the above argument for all time steps to get
\[
\sup_{t\in[0,T]} \W_1(f^{\Delta t_{k}} (t), f^{\Delta t_{k+1}}(t)) \leq C_4 T^2  2^{-k}\,.
\]
Therefore, $\{f^{\Delta t_k}([0,T])\}_{k\in \mathbb{N}}$ is a Cauchy sequence in $\mathcal{C}([0,T],\mathcal{P}_1(\Rd))$.
\end{proof}

Next, we show that Euler scheme converges to the unique solution to \eqref{eq:boltzmann}.

\begin{lemma}\label{l:existance} Under the same settings of Lemma \ref{l:cauchy}, there exists a limit $f\in \mathcal{C}([0,T],\mathcal{P}_1(\Rd))$ to the Euler scheme, which is the unique weak measure solution to \eqref{eq:boltzmann} with initial data $f_0\in \mathcal{P}_1(\Rd)$ in the sense of Definition \ref{def:weaksol}.
\end{lemma}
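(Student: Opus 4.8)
The plan is the standard constructive route: extract the limit of the Euler interpolants, verify that it solves~\eqref{eq:boltzmann}, and prove uniqueness by a Gr\"onwall argument in $\W_1$. By Lemma~\ref{l:cauchy} the family $\{f^{\Delta t_k}\}_{k}$ is Cauchy in $\mathcal{C}([0,T],\mathcal{P}_1(\Rd))$; since $(\mathcal{P}_1(\Rd),\W_1)$ is complete, so is $\mathcal{C}([0,T],\mathcal{P}_1(\Rd))$ with the uniform metric, and hence there is a limit $f\in\mathcal{C}([0,T],\mathcal{P}_1(\Rd))$ with $f(0)=f_0$, because $f^{\Delta t_k}(0)=f_0$ for all $k$.

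Next I would check that $f$ is a weak measure solution. Differentiating the interpolant~\eqref{eq:eulert} gives $\tfrac{\d}{\d t}\fdt(t)=Q(f_n,f_n)$ on $[t_n,t_{n+1})$, so testing against $\phi\in\mathcal{C}_0^\infty(\Rd)$ and integrating yields
\[
\langle\phi,f^{\Delta t_k}(t)\rangle=\langle\phi,f_0\rangle+\int_0^t\langle\phi,\,Q(f^{\Delta t_k}(\tau_k(s)),f^{\Delta t_k}(\tau_k(s)))\rangle\,\d s,
\]
where $\tau_k(s)=\Delta t_k\lfloor s/\Delta t_k\rfloor$ is the grid point below $s$. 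Passing to the limit $k\to\infty$: the left-hand side converges by uniform $\W_1$-convergence (using $|\langle\phi,\mu-\nu\rangle|\le\|\phi\|_{\textup{Lip}}\W_1(\mu,\nu)$); for the integrand, $\W_1(f^{\Delta t_k}(\tau_k(s)),f^{\Delta t_k}(s))\le\Delta t_k\,\W_1(f_n,Q^+(f_n,f_n))$ is $O(\Delta t_k)$ uniformly in $s$ by the moment bound of Lemma~\ref{l:moment}, so $f^{\Delta t_k}(\tau_k(s))\to f(s)$ in $\W_1$ for each $s$; then the continuity estimate~\eqref{eq:Qlip} (the $\W_1$-Lipschitzness of $Q^+$, which uses Assumption~\ref{asm:coll} and $M_\infty(\b)<\infty$) together with the uniform bound $|\langle\phi,Q(\mu,\mu)\rangle|\le 2\|\phi\|_\infty$ let me invoke dominated convergence. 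This produces the integrated weak formulation for $f$; since its integrand is bounded, $t\mapsto\langle\phi,f(t)\rangle$ is Lipschitz, hence absolutely continuous, and differentiating recovers Definition~\ref{def:weaksol}.

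For uniqueness, let $f,g$ be weak solutions with $f(0)=g(0)=f_0\in\mathcal{P}_1(\Rd)$. A preliminary step extends the weak formulation from $\mathcal{C}_0^\infty(\Rd)$ to bounded Lipschitz test functions by mollification (legitimate since $f(s),g(s)$ are probability measures); testing against $v\mapsto\min(|v|,R)$ and letting $R\to\infty$, using the linear growth~\eqref{eq:lin} and $M_\infty(\b)<\infty$, gives $M_1(f(t))\le e^{Ct}M_1(f_0)$ and likewise for $g$, so both stay in $\mathcal{P}_1(\Rd)$. Then, from $Q(h,h)=Q^+(h,h)-h$ and the absolute continuity of $t\mapsto\langle\phi,f(t)\rangle$, the Duhamel identity gives
\[
\langle\phi,f(t)\rangle=e^{-t}\langle\phi,f_0\rangle+\int_0^t e^{-(t-s)}\langle\phi,Q^+(f(s),f(s))\rangle\,\d s
\]
for all bounded Lipschitz $\phi$, and likewise for $g$. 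Subtracting, taking $\phi\in\textup{Lip}_1(\Rd)$ bounded, applying~\eqref{eq:Qlip} with $f_1=f_2=f(s)$, $g_1=g_2=g(s)$, using $e^{-(t-s)}\le 1$, and recovering the full Kantorovich--Rubinstein supremum by monotone truncation (valid since $f(s),g(s)\in\mathcal{P}_1(\Rd)$) yields $\W_1(f(t),g(t))\le 2L_\coll(1+M_\infty(\b))\int_0^t\W_1(f(s),g(s))\,\d s$, and Gr\"onwall forces $\W_1(f(t),g(t))\equiv 0$.

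The main obstacle is the mismatch between the test-function class $\mathcal{C}_0^\infty(\Rd)$ of Definition~\ref{def:weaksol} and the Lipschitz functions dual to $\W_1$: closing the Gr\"onwall estimate requires first upgrading the weak (equivalently Duhamel) formulation to bounded Lipschitz test functions, then propagating the first moment so that any weak solution lies in $\mathcal{P}_1(\Rd)$, and finally recovering the full $\W_1$ supremum by truncation, keeping the moments under control so the truncations converge. Once~\eqref{eq:Qlip} and the uniform moment bound are in hand, the rest is routine.
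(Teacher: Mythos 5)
Your proposal is correct and follows essentially the same route as the paper: completeness of $\mathcal{C}([0,T],\mathcal{P}_1(\Rd))$ gives the limit, the weak formulation is obtained by passing to the limit in the discrete scheme using the $\W_1$-Lipschitz continuity of $Q^+$ in \eqref{eq:Qlip} together with the Lipschitz-in-time regularity of the interpolant, and uniqueness follows from a Gr\"onwall estimate in $\W_1$. The only differences are cosmetic --- you phrase uniqueness via a Duhamel (integrating-factor) identity rather than the direct integral form, and you spell out the upgrade from $\mathcal{C}_0^\infty$ test functions to Lipschitz ones and the first-moment propagation for arbitrary weak solutions, technical points the paper's uniqueness argument passes over silently.
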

\begin{proof}
Since $\mathcal{P}_1(\Rd)$ equipped with Wasserstein-1 distance is a complete space, there exists a subsequence such that $f^{\Delta t_k} \to f\in \mathcal{C}([0,T], \mathcal{P}_1(\Rd))$. We also have $f(0) =f_0$. 
In the following, we will use the compact notation $\langle \phi,\mu\rangle := \int\phi(v) \mu(\d v)$ for a test function $\phi$ and measure $f$. Take any $\phi\in C_c^\infty$, from the Kantorovich--Rubinstein duality formula \eqref{eq:wass_dual}, we have for any $t\in [0,T]$
\[
\left |\langle \phi,  \fdt(t) \rangle - \langle \phi , f(t)\rangle \right | \leq  \|\nabla\phi\|_{L^\infty} \W_1(\fdt(t), f(t))\,.
\]
Next, thanks to the Lipschitz property \eqref{eq:Qlip} of $Q^+$ we have
\begin{small}
\begin{align*}
\big| 
& \langle \phi, \fdt(t_{n+1}) - \fdt(t_{n})\rangle  - \int_{t_n}^{t_{n+1}}\langle \phi, Q^+(f(s),f(s)) - f(s) \rangle \d s 
\big|   = \\
&  \qquad = \left| 
\int_{t_n}^{t_{n+1}}\left(  \langle \phi,  Q^+(\fdt(t_n),\fdt(t_n) ) - Q^+(f(s),f(s))  \rangle + \langle \phi, \fdt(t_n) - f(s)\rangle \right) \d s
\right| \\
& \qquad \leq  \|\nabla\phi\|_{L^\infty}\!\!\! \int_{t_n}^{t_{n+1}}\!\!\!\!\!\left(  \W_1\left ( Q^+(\fdt(t_n),\fdt(t_n)), Q^+(f(s),f(s))\right ) + \W_1(\fdt(t_n), f(s)) \right)\! \d s \\
& \qquad  \leq C_0 \int_{t_n}^{t_{n+1}}\W_1(\fdt(t_n), f(s))\d s\,.
\end{align*}\end{small}
From \eqref{eq:eulert}, we note that $\fdt$ is Lipschitz, which leads to 
\begin{small}
\begin{align*}
\int_{t_n}^{t_{n+1}}&\W_1(\fdt(t_n), f(s))\d s   \\
&\leq \int_{0}^{\Delta t } \left(  \W_1(\fdt(t_n), \fdt(t_n + s)) +  \W_1(\fdt(t_n + s), f(t_n + s))  \right) \d s \\
& \leq \int_{0}^{\Delta t} \Big ( s C_1 + \sup_{t\in[0,T]} \W_1(\fdt(t),f(t)) \Big ) \d s  \leq C_2 \Big( \Delta t^2 + \Delta t \sup_{t\in[0,T]} \W_1(\fdt(t),f(t)) \Big) \,.
\end{align*}\end{small}
Finally, consider as before $\Delta t_k = T/2^k$. By using the above estimates, we have
\begin{small}
\begin{align*}
\Big |&\langle \phi,  f(T) -f(0)\rangle -  \int_{0}^T\langle\phi, Q^+(f(s), f(s)) - f(s)  \rangle  \d s
\Big |  \\
&\quad \leq \Big |\langle \phi,  f(T) -f(0)\rangle -  \int_{0}^T\langle\phi, Q^+(f(s), f(s)) - f(s)  \rangle  \d s \\
& \quad\qquad - \langle \phi ,  f^{\Delta t_k}(T) - f^{\Delta t_k}(0)\rangle - \sum_{n = 0}^{2^k - 1}\langle \phi, f^{\Delta t_k}(t_{n+1}) - f^{\Delta t_k}(t_n)) 
\Big | \\
& \quad\leq C_2 \sup_{t\in[0,T]} \W_1(f^{\Delta t_k}(t),f(t)) + \sum_{n=0}^{2^k-1}\Big( \Delta t_k^2 + \Delta t_k \sup_{t\in[0,T]} \W_1(f^{\Delta t_k}(t),f(t))\Big )  \\
&\quad \leq C_2 (T + 1) \Big( \sup_{t\in[0,T]} \W_1(f^{\Delta t_k}(t),f(t)) + \Delta t\Big)\,.
\end{align*}
\end{small}
Since as $k \to \infty$, we have  $\sup_{t\in[0,T]} \W_1(f^{\Delta t_k}(t),f(t))\to 0$, $\Delta t \to 0$, and the above upper bound converges to 0. We can conclude that   $f\in \mathcal{C}([0,T],\mathcal{P}_1(\Rd))$ is a weak measure solution to \eqref{eq:boltzmann} in the sense of Definition \ref{def:weaksol}.

Similar computations also lead to uniqueness of the solution. Let $f_1,f_2$ be two weak measure solutions with initial data $f_0$. At time $t\in[0,T]$, we have for some $\phi\in\textup{Lip}_1(\Rd)$
\begin{align*}
\W_1(f_1(t), f_2(t)) & = \langle\phi, f_1(t) - f_2(t)\rangle \\
& + \int_0^t\!\!\! \left( \langle\phi, Q^+(f_1(s),f_1(s)) - Q^+(f_2(s),f_2(s)) \rangle + \langle\phi,f_1(s)- f_2(s) \rangle\right) \d s \\
& \leq C \int_0^t \W_1(f_1(s), f_2(s))\d s\,.
\end{align*}
By Gr\"onwall's inequality we can conclude that $\W_1(f_1(t),f_2(t)) = 0$ if $f_1(0) = f_2(0)$.
\end{proof}

Finally, we provide a proof to Theorem \ref{t:euler}.

\begin{proof}[Proof of Theorem \ref{t:euler}]

From Lemma \ref{l:existance}, we have existence and uniqueness of a weak measure solution $f$ to \eqref{eq:boltzmann} with initial data $f_0\in \mathcal{P}_1(\Rd)$.
We are left to show that $f\in \textup{Lip}([0,T],\mathcal{P}_1(\Rd))$ and that $\sup_{t\in[0,T]}\W_1(\fdt(t), f(t)) \leq C_{\textup{FE}}\Delta t$. 

First of all, since convergence in Wasserstein-1 distance implies convergence of the first moments^^>\cite{villani2009}, from Lemma \ref{l:moment} we have 
\[
M_1(f(t)) \leq \exp(CT)M_1(f_0) \quad \textup{for all}\;\; t\in [0,T]\,.
\]
Thanks to the linear growth assumption on $\coll(v,v_*,\theta)$ we also have for $\phi \in \mathcal{C}_c^\infty(\Rd)$
\[
\phi(v') - \phi(v) \leq  C \|\nabla\phi\|_{L^\infty} \left( |v| + |v_*|\right)\,.
\]
Next, since $f$ is a solution, for $0\leq s\leq t\leq T$ it holds
\begin{align*}
\langle \phi, f(t) - f(s)\rangle &  = \int_s^t \langle \phi, Q^+(f(\tau),f(\tau)) - f(\tau) \rangle \d \tau \\
& = \frac12\int_s^t\!\!  \iiint \!\!\!\b(\d \theta) \left(\phi(v') + \phi(v'_*) - \phi(v) - \phi(v_*) \right) f(\tau,\d v) f(\tau, \d v_*) \d \tau\\
& \leq 2 C \|\nabla\phi\|_{L^\infty}  \int_s^t M_1(f(\tau))\d \tau \leq |t - s| 2C\|\nabla\phi\|_{L^\infty}  \exp(CT)M_1(f_0)\,.
\end{align*}
By taking the limit of $\phi$ being the test function which realizes the Wasserstein-1 distance, it follows that $\W_1(f(t), f(s)) \leq |t - s| C$ for some $C>0$. Similar computations as above lead to 
\begin{small}
\begin{align*}
\W_1(\fdt(t_{n+1}), f(t_{n+1})) &\leq \W_1(\fdt(t_n), f(t_n)) + C_1\int_0^{\Delta t } \W_1(\fdt(t_n), f(t_n+s))\d s\\
& \leq \W_1(\fdt(t_n), f(t_n)) + \Delta t C_1 \W_1(\fdt(t_n), f(t_n)) + C \int_0^{\Delta t} s  \d s \\
&  = (1 + C_1\Delta t) \W_1(\fdt(t_n), f(t_n)) + C_2\Delta t^2/2
\end{align*}
\end{small}
for $t_n = n\Delta t \in [0,T]$, where we used the Lipschitz continuity of $Q^+$ \eqref{eq:Qlip} and of $f$. By iterating the estimate and by using the fact that $\fdt(0) = f(0) =f_0$, we have
$
\W_1(\fdt(t_n), f(t_n)) \leq C_3 \Delta t.
$
Since $f$ is Lipschitz, we can conclude that for an arbitrary $t\in[0,T]$ it holds
\begin{align*}
\W_1(\fdt(t),f(t)) &\leq \W_1(\fdt(\Delta t \lfloor t/\Delta t\rfloor), f(\Delta t \lfloor t/\Delta t\rfloor)) + C \Delta t  \leq (C_3 + C) \Delta t \,.
\end{align*}
\end{proof}

\section{Outlook}
\label{sec:end}
In this work we developed a novel mathematical framework which offers a deeper theoretical understanding of Monte Carlo methods for Boltzmann equations. This permits to obtain sharp convergence rates in the Wasserstein-1 metric for a general class of models which includes both classical examples from physics and engineering to novel applications in social sciences, life sciences and data science.  

This opens several promising directions for future research, both in terms of rigorous analysis and of developments of novel methods.
A natural next step is to apply this framework to the homogeneous Boltzmann equation, particularly for widely used collision kernels like the variable hard sphere (VHS) model and those relevant to DSMC methods^^>\cite{wagner2005book,wagner2000counter}. The main difficulty lies in the non-Lipschitz nature of the collision map (see Remark \ref{rmk:boltzmann}), which may be addressed using Lipschitz-type estimates and coupling techniques such as Tanaka’s trick^^>\cite{fournier2016,cortez2018}. Current research is also extending the analysis to exactly conservative Monte Carlo methods, like the Nanbu--Babovsky algorithm. Due to the introduction of intrinsic correlations among particles, in fact, the present analysis does not apply directly.

Further extensions include space-velocity kinetic models, such as the full Boltzmann equation, which are essential for realistic system modeling. Non-homogeneous traffic flow models also fall into this category, where dynamics often depend asymmetrically on vehicle positions^^>\cite{tosin2019traffic,puppo2017traffic}.

Lastly, ongoing research focuses on signed particle methods, which incorporate negative weights to reduce variance and computational cost in Monte Carlo simulations^^>\cite{caflish2015negative,bertaglia2024gradient}. Recent advances in Wasserstein distances for signed measures^^>\cite{piccoli2019signed} provide a solid analytical basis for studying and designing new, efficient particle methods within this extended framework.

\section*{Acknowledgments}
The research has been supported by the Royal Society under the Wolfson Fellowship “Uncertainty quantification, data-driven simulations and learning of multiscale complex systems governed by PDEs”. This work has been written within the activities of GNCS group of INdAM (Italian National Institute of High Mathematics). L.P. also acknowledges the  partial support 
by European Union -
NextGenerationEU through the Italian Ministry of University and Research as
part of the PNRR – Mission 4 Component 2, Investment 1.3 (MUR Directorial
Decree no. 341 of 03/15/2022), FAIR “Future” Partnership Artificial Intelligence Research”, Proposal Code PE00000013 - CUP DJ33C22002830006) and by MIUR-PRIN Project 2022, No. 2022KKJP4X “Advanced numerical methods for time dependent parametric partial differential equations with applications”.

\bibliographystyle{abbrv}
\bibliography{bibfile}
\end{document}